\newtheorem{theorem}{Theorem}
\newtheorem{lemma}[theorem]{Lemma}
\newenvironment{proof}{\noindent{\scshape Proof.}}{\hspace*{2mm}~$\square$}
\newcommand{\N}{\mathbb{N}}
\newcommand{\Z}{\mathbb{Z}}
\newcommand{\R}{\mathbb{R}}
\renewcommand{\H}{\mathscr{H}}
\newcommand{\ind}{\mathbf{1}}
\newcommand{\ep}{\epsilon}
\newcommand{\floor}[1]{\lfloor{#1}\rfloor}
\newcommand{\n}{\hspace*{-5pt}}
\DeclareMathOperator{\card}{card}
\DeclareMathOperator{\binomial}{Binomial \,}
\begin{document}

\begin{frontmatter}
\title     {Evolutionary games on the lattice: \\ death-birth updating process}
\runtitle  {Death-birth updating process}
\author    {Stephen Evilsizor and Nicolas Lanchier\thanks{Both authors were supported in part by NSA Grant MPS-14-040958.}}
\runauthor {S. Evilsizor and N. Lanchier}
\address   {School of Mathematical and Statistical Sciences \\ Arizona State University \\ Tempe, AZ 85287, USA.}

\maketitle

\begin{abstract} \ \
 This paper is concerned with the death-birth updating process.
 This model is an example of a spatial game in which players located on the~$d$-dimensional integer lattice are characterized by one of two possible strategies and update their strategy at rate one by mimicking one of their
 neighbors chosen at random with a probability proportional to the neighbor's payoff.
 To understand the role of space in the form of local interactions, the process is compared with its nonspatial deterministic counterpart for well-mixing populations, which is described by the replicator equation.
 To begin with, we prove that, provided the range of the interactions is sufficiently large, both strategies coexist on the lattice for a parameter region where the replicator equation also exhibits coexistence.
 Then, we identify parameter regions in which there is a dominant strategy that always wins on the lattice whereas the replicator equation displays either coexistence or bistability.
 Finally, we show that, for the one-dimensional nearest neighbor system and in the parameter region corresponding to the prisoner's dilemma game, cooperators can win on the lattice whereas defectors always win in well-mixing
 populations, thus showing that space favors cooperation.
 In particular, several parameter regions where the spatial and nonspatial models disagree are identified.
\end{abstract}

\begin{keyword}[class=AMS]
\kwd[Primary ]{60K35, 91A22}
\end{keyword}

\begin{keyword}
\kwd{Interacting particle systems, evolutionary game theory, evolutionary stable strategy, death-birth updating process, replicator equation, prisoner's dilemma, cooperation.}
\end{keyword}

\end{frontmatter}


\section{Introduction}
\label{sec:intro}

\indent This paper is concerned with a closely related version of the death-birth updating process in evolutionary game theory introduced in~\cite{ohtsuki_al_2006}.
 This model is an example of a spatial evolutionary game designed based on the framework of interacting particle systems.
 The concept of evolutionary game theory is an extension of traditional game theory that has been proposed by~\cite{maynardsmith_price_1973} to describe the dynamics of populations in which fitness is frequency dependent:
 individuals are viewed as players who are characterized by their strategy and receive a certain payoff through their interactions with other individuals.
 The payoff is then interpreted as fitness so that players with a larger payoff have a higher reproductive success.
 One of the most popular models of evolutionary game is the replicator equation~\cite{hofbauer_sigmund_1998} which assumes that the population is well-mixing.
 In contrast, the death-birth updating process includes a spatial structure in the form of local interactions:
 players are located on the set of vertices of a connected graph and interact with a finite set of neighbors, meaning that they update their strategy based on these neighbors.
 See~\cite[chapter~9]{nowak_2006} for a general definition and a brief review of such models.
 The model in~\cite{ohtsuki_al_2006} assumes that the updates are neutral with high probability and based on the payoff of the neighbors with small probability, which we refer respectively as voter and game steps.
 In contrast, the model considered in this paper only accounts for game steps, so the duality techniques~\cite{cox_durrett_perkins_2012} developed for voter model perturbations are no longer available tools to study the process.
 The main objective is to study the limiting behavior of the spatial stochastic process and confront our results with the limiting behavior of the replicator equation in order to understand the effects of the inclusion of space. \vspace*{5pt}

\pagebreak


\noindent{\bf Model description.}
 The process studied in this paper, which we again refer to as the death-birth updating process following the terminology in~\cite{ohtsuki_al_2006}, is a spin system on the~$d$-dimensional integer lattice where each vertex is occupied
 by a player characterized by one of two possible strategies, say strategy~1 and strategy~2.
 The state at time~$t$ is a function 
 $$ \xi_t : \Z^d \to \{1, 2 \} \quad \hbox{where} \quad \xi_t (x) = \hbox{strategy at vertex~$x$ at time~$t$}. $$
 The dynamics of this process or any other spatial game is defined in a couple of steps:
 we first fix a payoff matrix, which allows us to turn every spatial configuration of strategies into a so-called payoff landscape, which can then be used to define the transition rates at each vertex.
 Since we focus on games with two strategies, the payoff matrix is a~$2 \times 2$ matrix~$A = (a_{ij})$ whose coefficients are positive real numbers interpreted as
 $$ a_{ij} = \hbox{payoff of a type~$i$ player interacting with a type~$j$ player}. $$
 In nonspatial evolutionary games, players interact equally with any other player in the population, making their payoff a function of the global frequency of representatives of each strategy.
 In contrast, spatial games assume that the payoff of a player depends exclusively on the strategy of a finite set of neighbors, which is the key to designing more realistic models with local interactions.
 Throughout this paper, the interaction neighborhood of vertex~$x$ is the set
 $$ \begin{array}{l} N_x := \{y \in \Z^d : y \neq x \ \hbox{and} \ \max_{j = 1, 2, \ldots, d} |x_j - y_j| \leq M \} \end{array} $$
 where the constant~$M$ is called the range of the interactions.
 Letting~$N_j (x, \xi)$ be the number of neighbors of the player at vertex~$x$ following strategy~$j$, every spatial configuration~$\xi$ is then turned into a payoff landscape by attributing the payoff
\begin{equation}
\label{eq:landscape}
  \begin{array}{l} \phi (x, \xi) := \sum_j \,a_{ij} \,N_j (x, \xi) \,\ind \{\xi (x) = i \} \quad \hbox{for} \quad i = 1, 2, \end{array}
\end{equation}
 to the player at vertex~$x$.
 In words, each type~$i$ player receives~$a_{ij}$ from each of her neighbors following strategy~$j$.
 The last step to define the dynamics of the process is to follow~\cite{maynardsmith_price_1973} and interpret the payoff as fitness.
 The basic idea here is to write the rate at which a player changes her strategy as a function of her payoff and the payoff of her neighbors in such a way that players with a larger payoff are more likely to spread their strategy.
 There are multiple options.
 For instance, the updating rules considered in~\cite{evilsizor_lanchier_2014, lanchier_2015} are as follows.
\begin{itemize}
 \item Best-response dynamics~\cite{evilsizor_lanchier_2014}.
       Players update their strategy at rate one in order to maximize their payoff, which depends on the strategy of their neighbors. \vspace*{4pt}
 \item Payoff affecting birth and death rates~\cite{lanchier_2015}.
       In this process, when a player has a positive payoff, at rate this payoff, one of her neighbors chosen at random adopts her strategy, whereas when her payoff is negative, at rate minus this payoff,
       she adopts the strategy of one of her neighbors chosen at random.
       This updating rule is inspired from~\cite{brown_hansell_1987}.
\end{itemize}
 The dynamics of the death-birth updating process is built using a similar approach:
 we assume that players update their strategy at rate one by mimicking a random neighbor, with each neighbor being chosen with a probability proportional to her payoff.
 More precisely, letting~$\xi$ be the configuration of the system, the player at~$x$ switches her strategy~$i \to j$ at rate
\begin{equation}
\label{eq:transition}
  p_{i \to j} (x, \xi) := \frac{\sum_{y \in N_x} \phi (y, \xi) \ \ind \{\xi (y) = j \}}{\sum_{y \in N_x} \phi (y, \xi)} \quad \hbox{for} \quad \{i, j \} = \{1, 2 \}.
\end{equation}
 This model, or to be more specific, a closely related version of this model, has been introduced and studied heuristically in~\cite{ohtsuki_al_2006} while~\cite{chen_2013, cox_durrett_perkins_2012} give rigorous results.
 The process considered in these works can be seen as the weak selection approximation of the model described by~\eqref{eq:transition}.
 Players again update their strategy at rate one but, at the time of the update,
\begin{itemize}
 \item with probability~$1 - \ep$, the player mimics the strategy of a neighbor chosen uniformly at random, just like in the voter model~\cite{clifford_sudbury_1973, holley_liggett_1975}, while \vspace*{4pt}
 \item with probability~$\ep$, the player mimics a neighbor chosen at random according to probabilities that are proportional to the neighbors' payoff, as described by~\eqref{eq:transition}.
\end{itemize}
 This model is studied in~\cite{chen_2013, cox_durrett_perkins_2012, ohtsuki_al_2006} when~$\ep$ is small, in which case duality techniques for voter model perturbations are available.
 In contrast, we study the process when~$\ep = 1$, in which case duality cannot be used, which leads to more qualitative and less quantitative results. \vspace*{5pt}


\noindent{\bf The replicator equation.}
 Before studying the spatial game, it is worth taking a quick look at its nonspatial deterministic analog to later identify disagreements between both models and thus understand the effect of the inclusion of space in the form
 of local interactions.
 The nonspatial model is obtained by assuming that the population of players is well-mixing, which results in a system of ordinary differential equations for the frequency of each strategy.
 In the case of the death-birth process, this is a time-change of the replicator equation:
\begin{equation}
\label{eq:replicator}
  u_1' = u_1 \,u_2 \,(\phi_1 (u_1, u_2) - \phi_2 (u_1, u_2))
\end{equation}
 where~$u_j$ is the frequency of players following strategy~$j$ and
 $$ \phi_1 (u_1, u_2) = a_{11} u_1 + a_{12} u_2 \quad \hbox{and} \quad \phi_2 (u_1, u_2) = a_{21} u_1 + a_{22} u_2 $$
 are the common payoffs of all type~1 and all type~2 players, respectively.
 This can be viewed as the nonspatial analog of the payoff landscape~\eqref{eq:landscape}.
 As pointed out in~\cite{lanchier_2013}, the limiting behavior can be conveniently described by introducing the parameters
 $$ a_1 := a_{11} - a_{21} \quad \hbox{and} \quad a_2 := a_{22} - a_{12} $$
 and calling strategy~$i$ selfish whenever~$a_i > 0$ and altruistic whenever~$a_i < 0$.
 Then, following the usual terminology by calling a strategy an evolutionary stable strategy if it cannot be invaded by any alternative strategy starting at an infinitesimally small frequency,
 some basic algebra shows that the behavior of the replicator equation~\eqref{eq:replicator} is as follows:
\begin{itemize}
 \item when~$a_1 \,a_2 < 0$, the selfish strategy always outcompetes the altruistic strategy, showing that the selfish strategy is the only evolutionary stable strategy, \vspace*{4pt}
 \item when~$a_1, a_2 > 0$, there is an unstable interior fixed point so the system is bistable, showing that the two (selfish) strategies are evolutionary stable, \vspace*{4pt}
 \item when~$a_1, a_2 < 0$, there is a globally stable interior fixed point so both strategies coexist and none of the two (altruistic) strategies is evolutionary stable.
\end{itemize}
 In summary, the analysis of the replicator equation shows that, when the population is well-mixing, a strategy is an evolutionary stable strategy if it is selfish but not if it is altruistic. \vspace*{5pt}


\noindent{\bf Main results for the spatial game.}
 In order to compare the spatial game with its nonspatial analog, we assume that the process starts from a spatially homogeneous distribution, i.e., a product measure in which the density of each of the two strategies
 is constant across the lattice.
 Since the two configurations in which all players follow the same strategy are absorbing states, we also assume, to avoid trivialities, that the density of each strategy is positive.
 For the spatial game,
\begin{itemize}
 \item strategy~$j$ wins when~$\lim_{t \to \infty} \,P \,(\xi_t (x) = j) = 1$, \vspace*{4pt}
 \item strategy~$j$ survives when~$\liminf_{t \to \infty} \,P \,(\xi_t (x) = j) > 0$,
\end{itemize}
 a strategy is said to go extinct when it does not survive and both strategies are said to coexist when they both survive.
 Note that the two probabilities above do not depend on the choice of vertex~$x$ because both the initial distribution and the evolution rules are translation invariant.
 From now on, we assume without loss of generality that~$a_{21} > a_{12} > 0$ and study the limiting behavior of the process as the other two payoffs vary.

\indent To begin with, we look at the parameter region where both strategies are altruistic, in which case coexistence occurs when the population is well-mixing.
 Numerical simulations suggest that, except in the one-dimensional nearest neighbor case, coexistence is again possible for the spatial game though the coexistence region is reduced.
 The smaller the spatial dimension and/or the range of the interactions, the smaller the coexistence region.
 Our first two theorems show that coexistence is indeed possible and that the coexistence region for the spatial game is indeed reduced.
 More precisely, Theorem~\ref{th:coex} shows that, regardless of the spatial dimension, both strategies coexist when they are sufficiently altruistic and the range of the interactions is sufficiently large.
\begin{theorem} --
\label{th:coex}
 There is~$a > 0$ such that coexistence occurs when
 $$ \max \,(a_{11}, a_{22}) \leq a \quad \hbox{and} \quad M \ \hbox{is sufficiently large}. $$
\end{theorem}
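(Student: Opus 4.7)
The plan is a mesoscopic block construction that transfers the mean-field coexistence predicted by~\eqref{eq:replicator} into coexistence of the spatial game when~$M$ is large. Since~$a_{21} > a_{12} > 0$ are fixed, by taking $a = \max(a_{11}, a_{22})$ small enough we have $a_1 = a_{11} - a_{21} < 0$ and $a_2 = a_{22} - a_{12} < 0$, so the third bullet point preceding the theorem applies and~\eqref{eq:replicator} has a globally attracting interior fixed point~$u^* \in (0,1)$. Fix~$\eta > 0$ with $[u^* - \eta, u^* + \eta] \subset (0,1)$, fix~$\delta \in (0, u^* - \eta)$ smaller than the initial density of each strategy, and choose~$T < \infty$ such that every solution of~\eqref{eq:replicator} started in~$[\delta, 1-\delta]$ lies in~$[u^* - \eta/2, u^* + \eta/2]$ at time~$T$.

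Next I would prove a hydrodynamic-type approximation at a mesoscopic scale. Tile~$\Z^d$ with disjoint cubes~$B_z$ of side~$L = L(M)$ satisfying $L \to \infty$ and $L/M \to 0$. For every~$y \in B_z$, the neighborhood~$N_y$ has~$\Theta(M^d)$ vertices, only~$O(L^d)$ of which lie in~$B_z$, and the positivity of the payoffs guarantees that the denominator in~\eqref{eq:transition} is~$\Theta(M^d)$. A Gronwall and martingale argument applied to the empirical density
$$ u_z(t) := \frac{1}{|B_z|} \sum_{y \in B_z} \ind \{\xi_t(y) = 1\} $$
should yield $\sup_{t \leq T} |u_z(t) - \bar u(t)| \to 0$ in probability as~$M \to \infty$, where~$\bar u$ solves~\eqref{eq:replicator} with initial condition~$u_z(0)$: the semimartingale describing~$u_z$ has quadratic variation of order~$1/M^d$, and its drift differs from the right-hand side of~\eqref{eq:replicator} by an averaging error of order~$L/M$ coming from the small fraction of same-block neighbors.

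Finally I would run the block construction. Declare~$(z, n) \in \Z^d \times \N$ \emph{good} if $u_{z'}(nT) \in [\delta, 1-\delta]$ for every~$z'$ with $\|z' - z\|_\infty \leq K$, where~$K$ is a fixed integer large enough that every $N_y$ with $y \in B_z$ is covered by this block neighborhood. By the mesoscopic step, if all neighbors of~$(z, n)$ are good then with conditional probability at least $1 - \ep_M$, with $\ep_M \to 0$ as $M \to \infty$, the site $(z, n+1)$ is good as well, since each $u_{z'}((n+1)T)$ is close to some~$\bar u(T) \in [u^* - \eta/2, u^* + \eta/2]$. The finite-range dependence of the construction allows a standard Liggett--Schonmann--Stacey-type domination by a supercritical oriented site percolation on~$\Z^d \times \N$; since the initial product measure makes every~$(z, 0)$ good with probability tending to one, the origin belongs to an infinite good cluster with positive probability, so that
$$ \liminf_{t \to \infty} P(\xi_t(0) = 1) > 0 \quad \hbox{and} \quad \liminf_{t \to \infty} P(\xi_t(0) = 2) > 0, $$
which is coexistence.

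The main obstacle is the mesoscopic approximation: the rate~\eqref{eq:transition} is a \emph{ratio} of two sums that both depend on the configuration in~$N_x$, so concentration around the mean-field value requires simultaneous control of numerator and denominator together with the verification that the spatial correlations created by the dynamics over the bounded window~$[0,T]$ remain negligible at scale~$L^d$. The scale~$L$ must accordingly be tuned so that~$L^d$ is a law-of-large-numbers scale giving $u_z(0)$ close to the initial density, while~$L/M \to 0$ ensures that the averaging error in the drift is uniformly small in~$z$.
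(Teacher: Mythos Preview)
Your strategy---mean-field approximation followed by a block construction---is natural but differs from the paper's route, and the sketch contains a scale inconsistency that breaks the argument as written. You tile by cubes of side $L$ with $L/M \to 0$ and then ask for a \emph{fixed} integer $K$ such that every $N_y$ with $y \in B_z$ is contained in $\bigcup_{\|z'-z\|_\infty \le K} B_{z'}$; but $N_y$ has radius $M$, so this forces $KL \ge M$, hence $K \ge M/L \to \infty$. With $K$ unbounded the finite-range comparison with oriented percolation is lost. The same mismatch undermines the ODE claim itself: the drift of $u_z$ is determined by the configuration in a region of radius $2M$ around $B_z$, hence by roughly $(M/L)^d \to \infty$ neighboring box densities that need not agree with $u_z$, so the limiting object for a single box is not the autonomous replicator equation but a coupled system. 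Taking instead $L \sim M$ restores finite range, but then the ``averaging error of order $L/M$'' no longer vanishes and you would have to analyze the coupled mean-field system directly.

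The paper bypasses the hydrodynamic limit entirely. It works at scale $L = M/2$ (so a bounded number of boxes interact) and declares $(z,n)$ good when \emph{each} strategy has at least $M$ representatives in $B_z$---a density of order $1/M$, far from the interior fixed point. The mechanism is purely combinatorial: when $\max(a_{11},a_{22})$ is small compared to $\min(a_{12},a_{21})$, a minority of $\sim M$ type-$i$ players in a sea of type $j$ each collect payoff of order $a_{ij} M^2$, while their type-$j$ neighbors get only order $a_{jj} M^2 + a_{ji} M$; direct binomial large-deviation bounds (Lemmas~\ref{lem:keep}--\ref{lem:create}) then show the minority persists and propagates to adjacent boxes regardless of the configuration outside a region of radius $O(M)$, which supplies the finite-range event needed for the percolation coupling (Lemma~\ref{lem:coex-perco}).
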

 To prove that the coexistence region is reduced, and more generally identify parameter regions in which one strategy wins, we first observe that, when~$a_{11} = a_{12}$ and~$a_{22} = a_{21}$, the process is significantly
 simplified because the payoff of the players only depends on their strategy but not on the strategy of their neighbors.
 Indeed, in this case~\eqref{eq:landscape} reduces to
 $$ \begin{array}{l}
    \phi (x, \xi) = \sum_j a_{ij} \,N_j (x, \xi) \,\ind \{\xi (x) = i \} = a_{ii} \,((2M + 1)^d - 1) \,\ind \{\xi (x) = i \} \end{array} $$
 for~$i = 1, 2$, therefore the transition rates become
 $$ p_{i \to j} (x, \xi) = \frac{\sum_{y \in N_x} \phi (y, \xi) \ \ind \{\xi (y) = j \}}{\sum_{y \in N_x} \phi (y, \xi)} = \frac{a_{jj} \,N_j (x, \xi)}{a_{ii} \,N_i (x, \xi) + a_{jj} \,N_j (x, \xi)} $$
 for~$\{i, j \} = \{1, 2 \}$.
 It follows that, under our general assumption~$a_{12} < a_{21}$, the set of type~2 players dominates stochastically a certain biased voter model~\cite{bramson_griffeath_1980, bramson_griffeath_1981}, thus showing that,
 in this very special case, strategy~2 wins.
 Elaborating on this idea but using coupling arguments to compare the death-birth updating process with spin systems which are more complicated than the biased voter model, we can prove much more, as shown in the next theorem.
\begin{theorem} --
\label{th:coupling}
 Recall that~$a_{21} > a_{12}$ and let~$N := \card N_x$. Then,
\begin{itemize}
 \item[(a)] strategy~1 wins when~$\min \,(a_{12} - a_{22}, a_{11} - a_{21}) > (N - 1)(a_{21} - a_{12})$, \vspace*{4pt}
 \item[(b)] strategy~2 wins when~$(M, d) \neq (1, 1)$ and
  $$ (N^2 - N - 1) \,\max \,(a_{11} - a_{21}, a_{12} - a_{22}, a_{11} - a_{22}) < a_{21} - a_{12}. $$
\end{itemize}
\end{theorem}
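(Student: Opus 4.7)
The plan is to prove both parts by coupling the death-birth updating process with auxiliary spin systems whose asymptotic behavior is known. The first, common step is to obtain uniform payoff bounds: writing $\phi(y, \xi) = a_{i2} N + (a_{i1} - a_{i2}) N_1(y, \xi)$ for a type-$i$ vertex $y \in N_x$ and using the forced constraint $N_{\xi(x)}(y, \xi) \geq 1$ (because $y \in N_x$ implies $x \in N_y$), one obtains explicit upper and lower bounds on $\phi(y, \xi)$ depending only on the payoff matrix, on $N$, and on $\xi(x)$.

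For part~(a), the key observation is that the hypothesis is equivalent to the payoff separation
\begin{equation*}
 \min_{y \in N_x, \, \xi(y) = 1} \phi(y, \xi) \,>\, \max_{y \in N_x, \, \xi(y) = 2} \phi(y, \xi)
\end{equation*}
holding uniformly in $x$ and $\xi$: the first inequality $a_{11} - a_{21} > (N-1)(a_{21} - a_{12})$ is exactly what is needed when $\xi(x) = 1$, and the second one $a_{12} - a_{22} > (N-1)(a_{21} - a_{12})$ takes care of the case $\xi(x) = 2$. Writing $\bar\phi_i$ for the average payoff of the type-$i$ neighbors of $x$, a short algebraic manipulation then gives the identity
\begin{equation*}
 p_{2 \to 1}(x, \xi) - \frac{N_1(x, \xi)}{N} = \frac{N_1(x, \xi) N_2(x, \xi)(\bar\phi_1 - \bar\phi_2)}{N \sum_{y \in N_x} \phi(y, \xi)},
\end{equation*}
together with the symmetric upper bound on $p_{1 \to 2}(x, \xi) - N_2(x, \xi)/N$. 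The payoff separation forces the numerator to be uniformly bounded below whenever both types are present around $x$, producing a strict uniform drift in favor of strategy~$1$. We would then build a coupling of the death-birth process with a biased voter model in which strategy~$1$ is advantaged proportionally to this drift, and conclude by invoking the classical theorem that the advantaged type in a biased voter model on $\Z^d$ takes over any translation invariant initial configuration of positive density.

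Part~(b) follows the same template with strategy~$2$ in the role of the favored strategy, but the single-site inequality $\min \phi^2 > \max \phi^1$ need not hold under the weaker hypothesis. The factor $N^2 - N - 1$ in the statement signals the need to unfold the payoffs one step further: the transition rate at $x$ depends on the configuration of the two-step neighborhood $\bigcup_{y \in N_x} N_y$, whose size is governed by $N^2$, and the desired inequalities on $p_{1 \to 2}$ and $p_{2 \to 1}$ emerge only after averaging the payoff bounds over this thickened neighborhood rather than maximizing them site by site. The comparison process is therefore not a pure biased voter but a voter-type spin system whose rates depend on range-two information, and the exclusion of the case $(M, d) = (1, 1)$ reflects the fact that when $N = 2$ the two-step neighborhood is too thin for this averaging argument to dominate the extreme configurations at a thin interface between the two strategies.

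The hardest step is the second half of this program: identifying the right auxiliary spin system in part~(b) and verifying that strategy~$2$ indeed wins in it. The single-site payoff gap used in part~(a) reduces the argument essentially to a classical biased voter comparison, whereas in part~(b) the advantage of strategy~$2$ only materializes after averaging over the thickened neighborhood, and making this averaging quantitative while keeping the comparison process tractable is the technical heart of the proof.
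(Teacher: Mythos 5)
Your reduction of part (a) to a uniform payoff separation is correct: the two inequalities in the hypothesis are exactly what is needed so that every type-1 neighbor of $x$ has a strictly larger payoff than every type-2 neighbor of $x$, whichever the strategy at $x$, and your identity for $p_{2 \to 1}(x,\xi) - N_1(x,\xi)/N$ checks out (the paper reaches the same rate inequalities by splitting each neighbor's payoff into the contribution of $x$ and of the remaining neighbors via the weights $w_{ij}$). The problem is the concluding citation. Since the death-birth rates are probabilities bounded by one, what your separation actually dominates is not the classical additive biased voter model of Bramson and Griffeath (whose $2\to 1$ rate $\lambda N_1$ exceeds one, so the required rate comparison already fails when all neighbors have type 1), but a normalized, non-additive variant, e.g.\ update at rate one and adopt type 1 with probability $\lambda N_1/(\lambda N_1+N_2)$, or equivalently a voter model whose perturbation is proportional to $f_1 f_2$. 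There is no off-the-shelf ``classical theorem'' for these processes; duality for the Williams--Bjerknes model does not transfer. The paper sidesteps this by coupling with the modified voter model of Lanchier (2013), with rates $(1-\ep) f_1 + \ep \,\ind\{f_1 \neq 0\}$ for $2 \to 1$ and $(1-\ep) f_2 + \ep \,\ind\{f_1 = 0\}$ for $1 \to 2$, for which that paper proves by duality that type 1 wins; your separation inequality does imply domination of this process for $\ep$ small, so part (a) is repairable, but as written the last step rests on a result that does not apply.

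Part (b) is where the genuine gap lies: you only sketch a program (a range-two comparison process obtained by ``averaging over the thickened neighborhood'') and you explicitly leave open both the identification of the auxiliary spin system and the proof that strategy 2 wins in it, which is the entire content of this part. The paper's argument is quite different and does not involve range-two rates at all: it couples with the same modified voter model (roles of the strategies exchanged) and sharpens the single-site payoff bounds using a connectivity property of the neighborhood. Writing $M_+, M_-$ ($m_-, m_+$) for the maximal (minimal) payoff of a type-1 (type-2) player, with the minus/plus versions restricted to players having at least one neighbor of the opposite type, the key point is that when $(M,d) \neq (1,1)$ and $x$ has neighbors of both types, one can choose a type-1 neighbor $y^*$ and a type-2 neighbor $z^*$ that are adjacent to each other, so that $\phi(y^*,\xi) \leq M_-$ and $\phi(z^*,\xi) \geq m_+$; the comparison of $p_{1\to 2}$ and $p_{2\to 1}$ with the modified voter rates is then carried out through convexity/concavity of the resulting rational functions of $N_1, N_2$, under the conditions $(N-1)\,m_+ > (N-2)\,M_+ + M_-$ and $(N-1)\,M_- < (N-2)\,m_- + m_+$, and a four-case algebraic translation of these conditions produces the factor $N^2 - N - 1$. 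So that factor comes from the $1/N$ corrections in $M_-$ and $m_+$, not from the size of a two-step neighborhood, and the exclusion of $(M,d)=(1,1)$ is precisely the adjacency property above; your proposal, as it stands, does not constitute a proof of this part.
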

 Figure~\ref{fig:diagram-2D} shows the parameter regions in both parts of the theorem.
 Note that the parameter region in the first part of the theorem is nonempty if and only if
 $$ a_{12} - (N - 1)(a_{21} - a_{12}) > 0 \quad \hbox{if and only if} \quad a_{12} > (1 - 1/N) \,a_{21}. $$
 The figure shows this region for~$N = 2$, i.e., in the presence of one-dimensional nearest neighbor interactions.
 In contrast, the parameter region in the second part of the theorem is always nonempty and, more interestingly, it always overlaps the region where both strategies are altruistic as well as the region where both strategies are selfish.
 This shows that the inclusion of a spatial structure in the form of local interactions indeed reduces the coexistence region, as mentioned above.
 This also shows that, in a subset of the parameter region where the replicator equation is bistable, there is instead a strong type for the spatial game that wins even starting at low density.
 The next theorem goes a little bit further in this direction by showing that, no matter how selfish a strategy is, the other strategy always wins if it is selfish enough.
\begin{theorem} --
\label{th:growth}
 For all~$a > 0$ there is~$A < \infty$ such that strategy~1 wins when
 $$ \max \,(a_{21}, a_{22}) \leq a \quad \hbox{and} \quad a_{11} > A. $$
\end{theorem}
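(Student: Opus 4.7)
The intuition is that when $a_{11}$ is much larger than the other three payoffs, a pair of adjacent type-1 players becomes an extremely stable seed: such a player switches to type 2 at rate of order $1/a_{11}$, while any type-2 neighbor of a type-1 pair switches to type 1 at rate uniformly bounded away from zero. The plan is to quantify this and then compare the process with a supercritical oriented percolation through a block construction.

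\emph{Step 1 (rate estimates).} From~\eqref{eq:landscape}--\eqref{eq:transition}, writing $N := \card N_x$, I would check two bounds valid for any configuration $\xi$. First, if $\xi(x) = 1$ and $x$ has at least one type-1 neighbor $y$, then $\phi(y, \xi) \geq a_{11}$ (because $x$ is a type-1 neighbor of $y$) while the numerator of $p_{1 \to 2}(x, \xi)$ collects only $\phi(y', \xi) \leq aN$ over type-2 neighbors, giving
\[
  p_{1 \to 2}(x, \xi) \,\leq\, \frac{a N^2}{a_{11}}.
\]
Second, if $\xi(x) = 2$ and some type-1 neighbor $y$ of $x$ itself has a type-1 neighbor, then $\phi(y, \xi) \geq a_{11}$ enters the numerator of $p_{2 \to 1}(x, \xi)$, while the total payoff in the denominator is at most $(a_{11} + 2a) N^2$, whence
\[
  p_{2 \to 1}(x, \xi) \,\geq\, c(a, N) > 0.
\]

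\emph{Step 2 (local growth from a seed).} Fix a large finite box $B$ and a pair of adjacent type-1 vertices $y_1, y_2$ at its center. Combining the two bounds above, I would show that for any $\delta > 0$ there exist $T = T(B)$ and $A_0$ such that, for $a_{11} > A_0$, with probability at least $1 - \delta$ neither $y_1$ nor $y_2$ flips to type 2 in $[0, T]$ and every vertex of $B$ is type 1 at time $T$. The growth proceeds by generations: the first generation fills $N_{y_1} \cup N_{y_2}$ with type 1 in time $O(1/c)$ (each vertex flips at rate $\geq c$ by Step 1, using $y_1, y_2$ as the anchoring pair), and the resulting region contains new pairs that similarly fill their one-neighborhoods. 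Since $\mathrm{diam}(B)$ is finite, $O(\mathrm{diam}(B)/M)$ generations suffice; throughout this time, the already-filled region persists with probability $1 - O(T a / a_{11})$ by Step~1, which is close to $1$ for $a_{11}$ large.

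\emph{Step 3 (block construction and conclusion).} Tile $\Z^d$ into cubes of side $L \gg M$. By Step 2, given $\delta > 0$, one can choose $A$ so large that for $a_{11} > A$, whenever a cube $C$ contains a type-1 pair at time $s$, all vertices of $C$ and of its neighboring cubes are type 1 at time $s + T$ with probability at least $1 - \delta$. This yields a finite-range dependent oriented site percolation on $\Z^d \times \N$ that is supercritical once $\delta$ is small, and since the initial product measure has positive density of type-1 pairs, the initial good-cube field stochastically dominates a supercritical Bernoulli field; the Bezuidenhout--Grimmett--Durrett restart argument then gives $P(\xi_t(x) = 1) \to 1$ for any fixed $x$. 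The hard part is the lack of monotonicity: there is no direct coupling of $\xi_t$ with a contact-process-like lower bound, because both strategies enter~\eqref{eq:transition} symmetrically through the payoff ratio. The remedy is to localize every probabilistic estimate to finite space-time windows of the form $B \times [0, T]$, on which the rate bounds of Step 1 provide uniform control that does not rely on any global attractiveness of the process.
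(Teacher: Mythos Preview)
Your Steps~1--2 and the block construction in Step~3 track the paper's proof closely: the paper derives the same two rate estimates (phrased as $p_{1\to 2}(x,\xi)\to 0$ and $p_{2\to 1}(x,\xi)\to 1$ as $a_{11}\to\infty$ under the stated neighborhood conditions), couples with a Richardson growth model to obtain a box-filling lemma (Lemma~\ref{lem:richardson}), and then compares with oriented site percolation on the standard graph~$\H$ so that ``occupied'' sites (boxes entirely of type~1 throughout a time layer) dominate wet sites.

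The genuine gap is the last clause of Step~3. Domination by supercritical oriented percolation only yields $\liminf_t P(\xi_t(x)=1)\geq\theta(\ep)$, and $\theta(\ep)<1$ for every fixed $\ep>0$: dry sites persist with density $1-\theta$ at every time level, and nothing in your argument rules out type~2 surviving indefinitely inside those dry pockets. Invoking a ``Bezuidenhout--Grimmett--Durrett restart'' does not close this---that machinery runs from survival to a block condition rather than the reverse, and in any case leans on the attractiveness you have just disclaimed. The paper confronts exactly this point with an additional step: it enlarges the percolation graph to~$\H_+$ by adjoining horizontal edges $(z,n)\to(z\pm 2e_j,n)$, invokes~\cite{lanchier_2013} for the fact that dry $\H_+$-clusters are almost surely finite when~$\ep$ is small, and shows---using only that a type~2 can be born only adjacent to an existing type~2---that any type~2 player at level~$n$ lies at the endpoint of a directed dry path in~$\H_+$ starting from level~0. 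Finiteness of the dry clusters then forces $P(\xi_t(x)=2)\to 0$. This trace-back through dry sites, and in particular the need for the horizontal edges in~$\H_+$, is the missing idea in your sketch.
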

 Note that, contrary to Theorem~\ref{th:coupling}, this theorem does  not assume that~$a_{12} < a_{21}$, therefore it also extends the parameter region where strategy~2 wins found before.

\begin{figure}[t]
 \centering
 \scalebox{0.60}{\input{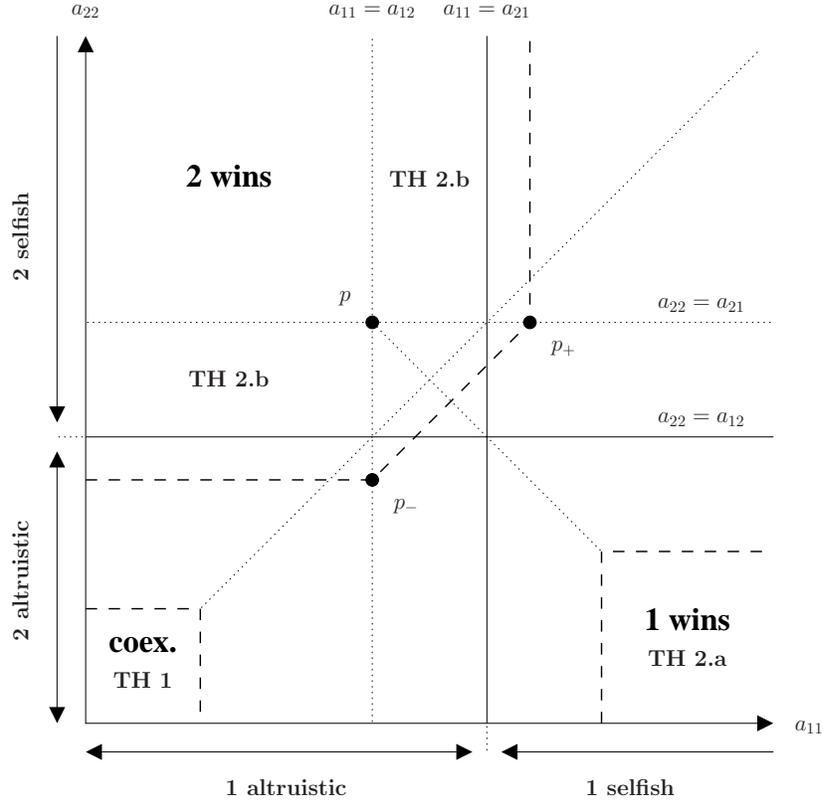}}
 \caption{\upshape Phase diagram of the spatial game along with a summary of the theorems in the $a_{11} - a_{22}$ plane.
  In the picture, the points~$p_-$ and~$p_+$ are the two points introduced in the proof of Lemma~\ref{lem:region}.}
\label{fig:diagram-2D}
\end{figure}
\begin{figure}[t]
 \centering
 \scalebox{0.60}{\input{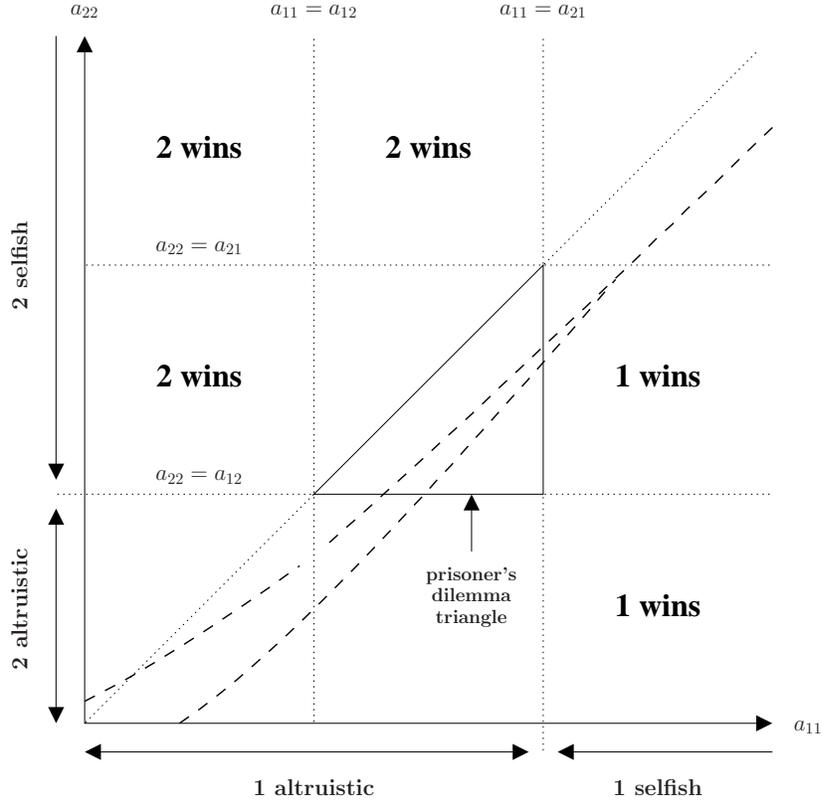}}
 \caption{\upshape Phase diagram of the one-dimensional nearest neighbor spatial game when~$a_{21} / a_{12} = 2$.
  The curves are obtained from the conditions in Theorem~\ref{th:dilemma} and their symmetric expressions.
  The triangle in solid lines represents the parameter region corresponding to the prisoner's dilemma game.}
\label{fig:diagram-1D}
\end{figure}

\indent The results collected so far indicate interesting discrepancies between the death-birth updating process and the replicator equation, showing the importance of local interactions.
 The most interesting aspect suggested by spatial simulations is the existence of a subset of the parameter region corresponding to the prisoner's dilemma game
 in which cooperators win on the lattice whereas they always lose when the population is well-mixing.
 The prisoner's dilemma game is characterized by the following ordering and terminology of the four payoffs:
 $$ \begin{array}{l} a_{12} = \hbox{sucker's payoff} < a_{22} = \hbox{punishment} \vspace*{2pt} \\ \hspace*{80pt} < a_{11} = \hbox{reward} < a_{21} = \hbox{temptation}. \end{array} $$
 Figure~\ref{fig:diagram-1D} shows the corresponding triangular region in solid lines.
 Players with strategy~1 are called cooperators while players with strategy~2 are called defectors.
 Because the reward is not as good as the temptation, and the punishment is not as bad as the sucker's payoff, cooperators are altruistic and defectors selfish, therefore defectors indeed win when the population is well-mixing.
 In contrast, the heuristic arguments in~\cite{ohtsuki_al_2006} suggest that there is a subset of the prisoner's dilemma triangle in which cooperators are favored over defectors on regular graphs.
 This has been proved in~\cite{chen_2013} for finite, connected, simple graphs, and in~\cite{cox_durrett_perkins_2012} for integer lattices with~$d > 2$.
 Their results, however, hold in the weak selection case but not for the process~\eqref{eq:transition}.
 We now study the interactions among cooperators and defectors in one dimension, the main difficulty being the lack of attractiveness of the process.
 To state our last result, we introduce the following quantities that will be interpreted later as drift of a certain interface:
 $$ \begin{array}{rcl}
      D_3 & \n := \n & \displaystyle \frac{a_{11} + a_{12}}{a_{11} + a_{12} + a_{21} + a_{22}} - \frac{a_{21} + a_{22}}{2 a_{11} + a_{21} + a_{22}} \vspace*{10pt} \\
      D_4 & \n := \n & \displaystyle \frac{a_{11} + a_{12}}{a_{11} + a_{12} + 2 a_{22}} - \frac{a_{21} + a_{22}}{2 a_{11} + a_{21} + a_{22}}. \end{array} $$
 Then, we have the following theorem.
\begin{theorem} --
\label{th:dilemma}
 Assume~$M = d = 1$.
 Then, strategy~1 wins when
 $$ (a_{22} < a_{21} \ \hbox{and} \ D_3 + D_4 > 0) \quad \hbox{or} \quad (a_{22} > a_{21} \ \hbox{and} \ D_4 > 0). $$
\end{theorem}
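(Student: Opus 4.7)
The plan is to exploit the one-dimensional nearest-neighbor structure, in which every update of $\xi_t$ occurs at a vertex adjacent to an interface (a bond $\{x,x+1\}$ with $\xi_t(x) \neq \xi_t(x+1)$), so the entire dynamics is encoded by the motion of these interfaces.

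\textbf{Step 1 (interface rates).} First I would compute, directly from~\eqref{eq:transition}, the local rate at which an interface jumps one step left or right, case-splitting on the lengths of the flanking 1-block and 2-block. A direct calculation shows that the two fractions in $D_4$ are exactly the probabilities that the interface moves one step right (cooperators invade) versus one step left when the adjacent 2-block has length $\geq 3$ and the adjacent 1-block has length $\geq 3$; the two fractions in $D_3$ are the analogous rates when the 2-block has length exactly $2$ and the 1-blocks are still long. Hence $D_3$ and $D_4$ are the net drifts of a single interface in two complementary regimes which together govern the fate of a ``typical'' 2-block.

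\textbf{Step 2 (single 2-block).} Next I would track the length $L_t$ of an auxiliary process started from a single finite 2-block embedded in an infinite sea of 1s. Since no $2$ can appear far from the block, the configuration stays of the form ``1s, one 2-block of length $L_t$, 1s,'' and $L_t$ performs a birth-death chain on $\{0,1,2,\ldots\}$ with $0$ absorbing, whose drifts at state $L=2$ and at states $L \geq 3$ are $-2D_3$ and $-2D_4$ respectively. The hypothesis $D_4>0$ forces negative drift on $\{3,4,\ldots\}$, so the chain reaches state $2$; the extra hypothesis of the theorem then controls the $\{2,3\}$ excursions. When $a_{22}>a_{21}$, the inequalities $D_3 > D_4 > 0$ hold automatically and absorption is straightforward; when $a_{22}<a_{21}$ one may have $D_3<0$, the chain may oscillate between $L=2$ and $L=3$, and the bound $D_3+D_4>0$ is precisely what is needed for the average drift over such an excursion to be negative and to give a finite expected absorption time.

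\textbf{Step 3 (product measure).} The last step is to transfer this single-block estimate to the process started from a translation-invariant product measure. Because the death-birth updating process is not attractive, I cannot invoke a direct monotone coupling with a biased voter model as in the proof of Theorem~\ref{th:coupling}. Instead, I would run a renormalization argument on large space-time rectangles: using a common graphical representation, couple the restricted process to the single-block dynamics of Step~2, declare a rectangle ``good'' when every $2$ inside dies within the time window, and conclude from a one-dependent oriented percolation comparison of Bezuidenhout-Grimmett type that $P(\xi_t(x) = 1) \to 1$.

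\textbf{Main obstacle.} The difficulty is concentrated in Step~3. The drifts $D_3$ and $D_4$ only describe the generic interface rates; length-$1$ blocks of either type, and mergers of two 2-blocks after the intervening 1-block is eroded, produce boundary configurations whose rates do not coincide with the fractions appearing in $D_3$ or $D_4$. The crux of the argument is to construct an explicit supermartingale built from $D_3$ and $D_4$ which dominates these boundary effects uniformly in the configuration, most delicately in the regime $a_{22}<a_{21}$, where only the weaker combined bound $D_3+D_4>0$ is available and the length chain is genuinely non-monotone.
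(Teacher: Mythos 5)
Your Steps 1--2 are correct as computations (the identification of $-2D_3$ and $-2D_4$ as the length drifts of an isolated 2-block at length $2$ and length $\geq 3$ matches the rates $p_1, p_2$ used in the paper), but the proof of the theorem is concentrated in your Step 3, and there you do not give an argument: you propose a renormalization/oriented-percolation scheme whose key input would be a uniform estimate on configurations with several interacting 2-blocks, and you yourself flag the construction of the required supermartingale as the unresolved ``main obstacle.'' This is a genuine gap, not a technicality: starting from a product measure the 2s never form a single block in a sea of 1s, the separating 1-intervals are short and can be eroded, and the lack of attractiveness prevents any domination by the single-block chain of Step 2. A symptom that Step 2 is analyzing the wrong object is your reading of the hypothesis: for an isolated 2-block, $D_4>0$ alone already gives almost sure extinction (uniform negative drift on $\{3,4,\dots\}$ forces returns to small lengths, from which absorption has probability bounded below), so the single-block picture cannot explain why the stronger condition $D_3+D_4>0$ is what is needed when $a_{22}<a_{21}$.

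The paper bridges exactly this gap by working with the complementary object and avoiding percolation altogether. Lemma~\ref{lem:expand} considers a finite interval of 1s with 2s to its left and an \emph{arbitrary} configuration to its right, and shows via the exponential supermartingale $e^{-bM_t}$ and optional stopping that with probability at least $c>0$ the interval never drops below length $3$ and grows to infinity; the point is that the right-edge drift is $D_{K_t\wedge 4}$ and the left edge contributes $D_4$, so the condition $D_3+D_4>0$ (respectively $D_4>0$ when $a_{22}>a_{21}$, where $D_4<D_3$) gives a drift bound that is uniform in the environment to the right --- this is precisely the uniform-in-configuration control your sketch is missing, and it explains where $D_3+D_4$ comes from. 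Lemmas~\ref{lem:bounded-1}--\ref{lem:bounded-2} then show that each space-time type-2 connected component $C_0$ is almost surely bounded: if it survived forever, the flanking 1-intervals $M_t^-, M_t^+$ would exceed $3$ infinitely often, each such occasion gives probability at least $c^2$ that both flanking intervals grow forever and squeeze the component down to width $\leq 3$, after which it dies with probability bounded below; an infinitely-many-trials argument yields a contradiction. Summing over the countably many initial 2s finishes the proof. So even granting your Steps 1--2, you would still need to supply the substance of Lemma~\ref{lem:expand} (or an equivalent uniform estimate) before your Step 3 could be made to work, and the paper's route shows this can be done without any block construction.
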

 Note that the parameter region given by the theorem overlaps but is not restricted to the prisoner's dilemma triangle.
 To see that the theorem implies the existence of a subset of the triangle in which cooperators win, observe that, when~$a_{11} = a_{21} > a_{22} = a_{12}$,
 $$ D_3 + D_4 = \frac{1}{2} + \frac{a_{12} + a_{21}}{3 a_{12} + a_{21}} - 2 \times \frac{a_{12} + a_{21}}{a_{12} + 3 a_{21}} > \frac{1}{2} + \frac{1}{2} - 2 \times \frac{1}{2} = 0. $$
 In particular, the first parameter region given by the theorem in which cooperators win indeed overlap the prisoner's dilemma triangle.
 The theorem also implies that strategy~2 wins in the parameter regions obtained by exchanging the role of the two strategies.
 Noticing in addition the symmetry in the expression of~$D_4$, we deduce from the second part of the theorem that
 $$ \begin{array}{ccccc}
    \hbox{strategy~1 wins when} & a_{22} > a_{21} & \hbox{and} & D_4 > 0 \vspace*{2pt} \\
    \hbox{strategy~2 wins when} & a_{11} > a_{12} & \hbox{and} & D_4 < 0 \end{array} $$
 showing that, when~$\min (a_{11}, a_{22}) > \max (a_{12}, a_{21})$, the condition is sharp.
 Figure~\ref{fig:diagram-1D} gives a picture of the curves derived from the theorem when~$a_{21} / a_{12} = 2$.


\section{Coexistence of altruistic strategies}
\label{sec:coexistence}

\indent This section is devoted to the proof of our coexistence result Theorem~\ref{th:coex}.
 For simplicity, we focus on the two-dimensional case but our approach easily extends to any spatial dimensions.
 Specifically, we will prove that both strategies coexist when~$M$ is large and
\begin{equation}
\label{eq:condition-coex}
  \max \,(a_{11}, a_{22}) \leq 5^{-2} \,2^{-21} \,(c_-)^5 \,\min \,(a_{12}, a_{21}) = 2^{-14} \,(c_+)^{-1} \,\min \,(a_{12}, a_{21})
\end{equation}
 where the two key constants~$c_-$ and~$c_+$ are defined as
\begin{equation}
\label{eq:constant-coex}
  c_- := 2^{-17} \,\min \,(a_{12} / a_{21}, a_{21} / a_{12}) \quad \hbox{and} \quad c_+ := 5^2 \,2^7 (c_-)^{-5}.
\end{equation}
 Let~$s := \ln (2)$ and~$K_r := [- rM, rM)^2$ for all~$r > 0$, and fix
 $$ A, B \subset K_{1/2} \quad \hbox{with} \quad \card (A) = \card (B) = 2^{-2} \,M^2. $$
 The proofs of Lemmas~\ref{lem:move}--\ref{lem:create} below hold for such general sets though they will be applied ultimately to more specific space-time boxes.
 One key to the proof is to observe that
 $$ \begin{array}{l} \max_{j = 1, 2} |x_j - y_j| \leq M \quad \hbox{for all} \quad (x, y) \in A \times B. \end{array} $$
 For all~$D \subset \Z^2$ finite and~$i = 1, 2$, we let
 $$ \zeta_t^i (D) := \card \,\{x \in D : \xi_t (x) = i \} $$
 denote the number of type~$i$ players in the set~$D$ at time~$t$. \\


\noindent{\bf Keeping the players in a box} --
 To begin with, we prove in the next lemma that, the number of players of either type in a given spatial region does not decrease too fast.
 The idea is to simply find a bound for the number of updates using standard large deviation estimates for the binomial random variable.
 This lemma will be used repeatedly later.
\begin{lemma} --
\label{lem:keep}
 For all~$D \subset \Z^2$ finite, $n \in \N$ and~$i = 1, 2$,
 $$ P \,(\zeta_t^i (D) \leq 2^{- (n + 1)} \,K \ \hbox{for some} \ t \in (0, ns) \ | \ \zeta_0^i (D) \geq K) \leq \exp (- 2^{- (n + 3)} \,K). $$
\end{lemma}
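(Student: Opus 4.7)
The plan is to bound $\zeta_t^i (D)$ pathwise from below by a quantity I can write down explicitly, namely the number of sites in $D$ that start as type~$i$ and whose Poisson clock has not yet rung. Because only a clock ring can trigger a strategy change, any such site is still type~$i$, and the resulting lower process ignores both the configuration~$\xi$ and the intricate transition rates in~\eqref{eq:transition}; this is what makes the bound uniform in the whole payoff matrix.

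To carry this out, let $T_x$ denote the first arrival after time zero of the rate-one Poisson clock attached to~$x$ in the graphical construction of the process, and set
$$ X_t := \card \,\{x \in D : \xi_0 (x) = i \ \hbox{and} \ T_x > t \}. $$
Every vertex counted by $X_t$ still bears strategy~$i$ at time~$t$, so $\zeta_t^i (D) \geq X_t$ pathwise. Since $t \mapsto X_t$ is nonincreasing,
$$ \{\zeta_t^i (D) \leq 2^{- (n + 1)} K \ \hbox{for some} \ t \in (0, ns) \} \ \subseteq \ \{X_{ns} \leq 2^{- (n + 1)} K \}, $$
and it is enough to estimate the probability of the right-hand event under the conditioning $\zeta_0^i (D) \geq K$.

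Conditionally on the initial configuration, $X_{ns}$ is a sum of $\zeta_0^i (D)$ independent $\mathrm{Bernoulli}(e^{-ns}) = \mathrm{Bernoulli}(2^{-n})$ indicators, since the $T_x$'s are independent rate-one exponentials. On $\{\zeta_0^i (D) \geq K\}$, the variable $X_{ns}$ therefore stochastically dominates a $\mathrm{Binomial}(K, 2^{-n})$ random variable~$Z$ with mean $\mu := 2^{-n} K$. The standard lower-tail Chernoff inequality $P (Z \leq \mu / 2) \leq \exp (-\mu / 8)$ then yields
$$ P \,(X_{ns} \leq 2^{- (n + 1)} K \ | \ \zeta_0^i (D) \geq K) \ \leq \ \exp (- \mu / 8) \ = \ \exp (- 2^{- (n + 3)} K), $$
which is the claimed estimate. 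There is no substantive obstacle here; the only design choice is the lower bound by the trivial dominated process~$X_t$, which is dictated by the need for an estimate that does not depend on the quantities $p_{i \to j}$ of~\eqref{eq:transition}.
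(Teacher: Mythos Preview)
Your proof is correct and follows essentially the same approach as the paper: both arguments lower-bound $\zeta_t^i(D)$ by the count of initially type-$i$ sites whose clocks have not yet rung, reduce the event to a lower-tail estimate for a $\mathrm{Binomial}(K,2^{-n})$ variable, and apply the same Chernoff-type inequality to obtain $\exp(-2^{-(n+3)}K)$. The only cosmetic difference is that the paper phrases things via the complementary count $u_i(D)$ of sites that \emph{have} updated, whereas you work directly with $X_t$.
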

\begin{proof}
 To begin with, we let
 $$ u_i (D) := \card \,\{x \in D : \xi_0 (x) = i \ \hbox{and} \ T_1 (x) < ns \} $$
 denote the total number of players in the set~$D$ who are initially of type~$i$ and update their strategy at least once by time~$ns$.
 Recalling that the random variables~$T_1 (x)$ are independent and exponentially distributed with rate one, our choice of~$s$ implies that
 $$ u_i (D) = \binomial (\zeta_0^i (D), 1 - e^{-ns}) = \binomial (\zeta_0^i (D), 1 - 2^{-n}). $$
 In particular, using the large deviation estimate
\begin{equation}
\label{eq:keep-1}
  P \,(\binomial (K, p) \leq K \,(p - z)) \leq \exp (- Kz^2 / 2p) \quad \hbox{for all} \quad z \in (0, p)
\end{equation}
 with~$p = 2^{-n}$ and~$z = 2^{- (n + 1)}$, we get
 $$ \begin{array}{l}
      P \,(\zeta_t^i (D) \leq 2^{- (n + 1)} \,K \ \hbox{for some time} \ t \in (0, ns) \ | \ \zeta_0^i (D) \geq K) \vspace*{4pt} \\ \hspace*{25pt} \leq \
      P \,(u_i (D) \geq (1 - 2^{- (n + 1)}) \,K \ | \ \zeta_0^i (D) = K) \vspace*{4pt} \\ \hspace*{25pt} = \
      P \,(\binomial (K, 1 - 2^{-n}) \geq (1 - 2^{- (n + 1)}) \,K) \vspace*{4pt} \\ \hspace*{25pt} = \
      P \,(\binomial (K, 2^{-n}) \leq 2^{- (n + 1)} \,K) \leq \exp (- 2^{- (n + 3)} \,K). \end{array} $$
 This completes the proof of the lemma.
\end{proof} \\


\noindent{\bf Moving the players around} --
 We now prove that if the region~$A$ has a large number of type~1 players then, regardless of the configuration around this region, we can ``move''  a positive fraction of these players to the nearby region~$B$ in~$s$ units of time.
 The constant~$c_-$ defined in~\eqref{eq:constant-coex} will play the role of the fraction of players we can move.
\begin{lemma} --
\label{lem:move}
 Assume~\eqref{eq:condition-coex} and let~$c_-$ as in~\eqref{eq:constant-coex} and~$a > 0$. Then,
 $$ P \,(\zeta_s^1 (B) \leq c_- (aM) \ | \ \zeta_0^1 (A) \geq aM) \leq \exp (- (aM)^{1/2}) \quad \hbox{for all~$M$ large}. $$
\end{lemma}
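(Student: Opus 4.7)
The plan is to combine Lemma~\ref{lem:keep} (which keeps the source of type-$1$ players in~$A$ alive) with a dichotomy and a drift argument on the count $\zeta_t^1(B)$. First, apply Lemma~\ref{lem:keep} with $D = A$, $i = 1$, $K = aM$, $n = 1$ to produce an event $\mathcal{E}_1$ with $P(\mathcal{E}_1) \geq 1 - \exp(-aM/16)$ on which $\zeta_t^1(A) \geq aM/4$ for all $t \in [0, s]$. Since $\exp(-aM/16) \ll \exp(-(aM)^{1/2})$ for $M$ large, it is enough to work on $\mathcal{E}_1$. The key geometric remark is that, because $A \cup B \subset K_{1/2}$, we have $A \subset N_x$ for every $x \in B$ and $B \setminus \{y\} \subset N_y$ for every $y \in A$.

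Fix now $t \in [0, s]$. Either $\zeta_t^1(B) \geq c_-(aM)$, in which case there is nothing to prove at time~$t$; or $\zeta_t^2(B) > |B| - c_-(aM) \geq M^2/8$ for $M$ large, so each type-$1$ player $y \in A$ satisfies $N_2(y, \xi_t) \geq \zeta_t^2(B) - 1 \geq M^2/16$ and therefore $\phi(y, \xi_t) \geq a_{12} M^2/16$. Combining this with $\zeta_t^1(A) \geq aM/4$ and the crude bound $\sum_{z \in N_x} \phi(z, \xi_t) \leq N^2 a_{21} \leq 25\, a_{21} M^4$, the update rule~\eqref{eq:transition} gives, for every $x \in B$,
$$
  p_{2 \to 1}(x, \xi_t) \;\geq\; \frac{(aM/4)(a_{12} M^2/16)}{25\, a_{21} M^4}
  \;=\; \frac{a_{12}}{a_{21}} \cdot \frac{a}{1600\,M}
  \;\geq\; \frac{80\, c_- a}{M},
$$
where the last inequality uses $a_{12}/a_{21} \geq 2^{17} c_-$, a consequence of~\eqref{eq:constant-coex}.

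Consequently, on $\mathcal{E}_1$, whenever $\zeta_t^1(B) < c_-(aM)$ the total rate of $+1$ jumps of $\zeta_t^1(B)$ is at least $\zeta_t^2(B) \cdot 80\,c_- a/M \geq 10\,c_-(aM)$, while the rate of $-1$ jumps is at most $\zeta_t^1(B) \leq c_-(aM)$, which produces a net drift of at least $9\,c_-(aM)$. The same dichotomy in fact shows that positive drift persists up to level $\approx 20\,c_-(aM)$, so once the process has exceeded $c_-(aM)$ it is unlikely to drop back. A standard argument using the Doob--Meyer compensator of $\zeta_t^1(B)$ and an exponential martingale inequality of Freedman or Bernstein type then converts the drift into the tail bound $P(\zeta_s^1(B) < c_-(aM) \mid \mathcal{E}_1) \leq \exp(-c\,(aM)^{1/2})$, which together with Step~1 yields the lemma. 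The delicate part is precisely this last concentration step: the indicators $\mathbf{1}\{\xi_s(x) = 1\}$ at distinct sites $x \in B$ are coupled through their overlapping neighborhoods, so one cannot directly appeal to a Chernoff bound for independent Bernoullis; on the other hand, the sub-Gaussian rate $(aM)^{1/2}$ in the exponent is much weaker than what the martingale approach naturally delivers, which provides ample slack.
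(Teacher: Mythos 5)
Your estimates are correct, and your route is genuinely different from the paper's. The paper splits according to whether $\zeta_0^1 (B)$ is above or below $aM$, applies Lemma~\ref{lem:keep} twice to guarantee that $\zeta_t^1 (A) \geq 2^{-2} aM$ and $\zeta_t^2 (B) \geq 2^{-5} M^2$ for all $t \in (0, s)$, and then, on this good event, bounds below by $p_1 \geq 2^5 c_- (aM^{-1})$ the probability that any given update of a site of~$B$ results in type~1; the number of sites of~$B$ updating by time~$s$ dominates a $\binomial (2^{-2} M^2, 1/2)$, the number of those ending in state~1 dominates a $\binomial (2^{-4} M^2, p_1)$, and~\eqref{eq:keep-1} finishes. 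In particular, the dependence between sites that worries you is sidestepped there by stochastic domination: on the good event the success probability at each update time is bounded below uniformly in the configuration, so no independence of the indicators $\ind \{\xi_s (x) = 1\}$ is ever needed. Your version replaces the case split and the auxiliary event on $\zeta_t^2 (B)$ by the cleaner pointwise dichotomy $\zeta_t^1 (B) < c_- (aM) \Rightarrow \zeta_t^2 (B) \geq M^2 / 8$ (using $\zeta_t^1 (B) + \zeta_t^2 (B) = \card (B)$), and replaces the binomial domination by a drift argument on the count $\zeta_t^1 (B)$; your rate bound $p_{2 \to 1} (x, \xi_t) \geq 80 \, c_- a / M$ is of the same order as the paper's $p_1$ and is derived correctly from \eqref{eq:transition} and \eqref{eq:condition-coex}--\eqref{eq:constant-coex}.

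The one thin spot is the final concentration step, which you assert as standard. Because your drift bound holds only while $\zeta_t^1 (B)$ is below a threshold, whereas the statement concerns the value at the fixed time~$s$, Freedman's inequality applied naively to the compensated count does not suffice: you must rule out the scenario where the count climbs above the threshold and recrosses down just before time~$s$, and you must localize so that the rate bounds are actually in force. This is fixable along the lines you hint at: stop the process upon exiting the strip $[0, 2 c_- (aM))$; for trajectories confined to the strip, the drift is at least $8 c_- (aM)$ per unit time and the jump rate at most $12 c_- (aM)$, so Freedman on the stopped compensated process gives an exponent of order $c_- (aM)$ (here it matters that $s = \ln 2$ is long enough for the accumulated drift to exceed the strip width); for trajectories that exit upward, bound the probability of a final downcrossing from $2 c_- (aM)$ to below $c_- (aM)$ inside the strip by a gambler's-ruin estimate for the embedded walk (down versus up probabilities at most $1/6$ versus at least $5/6$), union-bounded over the at most Poisson$(2^{-2} M^2 s)$ many excursions. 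All error terms are then exponentially small in~$M$, so your remark that there is ample slack relative to $\exp (- (aM)^{1/2})$ is right; but as written this step is a sketch rather than a proof, while the paper's binomial-domination argument closes the same gap in two lines.
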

\begin{proof}
 Applying Lemma~\ref{lem:keep} with~$n = 1$ and~$K = aM$, we get
 $$ \begin{array}{l}
      P \,(\zeta_s^1 (B) \leq c_- (aM) \ | \ \zeta_0^1 (A) \geq aM \ \hbox{and} \ \zeta_0^1 (B) \geq aM) \vspace*{4pt} \\ \hspace*{25pt} \leq \
      P \,(\zeta_t^1 (B) \leq 2^{-2} (aM) \ \hbox{for some} \ t \in (0, s) \ | \ \zeta_0^1 (A) \geq aM \ \hbox{and} \ \zeta_0^1 (B) \geq aM) \vspace*{4pt} \\ \hspace*{25pt} \leq \
        \exp (- 2^{-4} \,aM) \leq \exp (- (aM)^{1/2}) \end{array} $$
 for all~$M$ large.
 It remains to prove that
\begin{equation}
\label{eq:move-1}
  P \,(\zeta_s^1 (B) \leq c_- (aM) \ | \ \zeta_0^1 (B) \leq aM \leq \zeta_0^1 (A)) \leq \exp (- (aM)^{1/2})
\end{equation}
 for all~$M$ large.
 To lighten the notation, we let
 $$ P^* (\mathbf E) := P \,(\mathbf E \ | \ \zeta_0^1 (B) \leq aM \leq \zeta_0^1 (A)) \quad \hbox{for any event} \ \mathbf E $$
 and introduce the two events
\begin{equation}
\label{eq:move-2}
  \begin{array}{rcl}
   \mathbf A & \n := \n & \{\zeta_t^1 (A) \leq 2^{-2} \,aM  \ \hbox{for some time} \ t \in (0, s) \}  \vspace*{3pt} \\
   \mathbf B & \n := \n & \{\zeta_t^2 (B) \leq 2^{-5} \,M^2 \ \hbox{for some time} \ t \in (0, s) \}. \end{array}
\end{equation}
 Applying Lemma~\ref{lem:keep} with~$K = aM$ then~$K = 2^{-3} \,M^2$, we get
\begin{equation}
\label{eq:move-3}
  P^* (\mathbf A) \leq \exp (- 2^{-4} \,aM) \quad \hbox{and} \quad P^* (\mathbf B) \leq \exp (- 2^{-7} \,M^2).
\end{equation}
 In addition, observing that the conditional payoff of each type~1 player in the set~$A$ given that the number of type~2 players in the set~$B$ is large satisfies
 $$ \phi (x \,| \,\xi_t (x) = 1, \mathbf B^c) \geq a_{12} \ 2^{-5} \,M^2 \quad \hbox{for all} \quad t \in (0, s) $$
 we deduce that, on the event~$(\mathbf A \cup \mathbf B)^c$, each time a player in the set~$B$ updates her strategy, she remains/becomes of type~1 with probability at least
\begin{equation}
\label{eq:move-4}
  \begin{array}{rcl}
    p_1 & \n \geq \n & (a_{12} \ 2^{-5} \,M^2)(2^{-2} \,aM)((a_{12} \ 2^{-5} \,M^2)(2^{-2} \,aM) + (a_{21} + a_{22})(2M + 1)^4)^{-1} \vspace*{4pt} \\
        & \n \geq \n & (a_{12} \ 2^{-5} \,M^2)(2^{-2} \,aM)(a_{21} \,2^5 \,M^4)^{-1} \geq 2^{-12} \,(a_{12} / a_{21}) \,aM^{-1} \vspace*{4pt} \\
        & \n \geq \n &  2^5 \,c_- (aM^{-1}) \end{array}
\end{equation}
 for all~$M$ large.
 In particular, letting
\begin{equation}
\label{eq:move-5}
  \begin{array}{rcl}
    X_u & \n := \n & \card \,\{x \in B : T_1 (x) < s \} \vspace*{3pt} \\
    X_1 & \n := \n & \card \,\{x \in B : T_1 (x) < s \ \hbox{and} \ \xi_s (x) = 1 \} \end{array}
\end{equation}
 observing from~\eqref{eq:move-4} that~$2^{-5} \,M^2 \,p_1 \geq c_- (aM)$, and using~\eqref{eq:keep-1}, we deduce that
\begin{equation}
\label{eq:move-6}
  \begin{array}{l}
    P \,(X_1 \leq c_- (aM) \ | \ (\mathbf A \cup \mathbf B)^c) \vspace*{4pt} \\ \hspace*{20pt} \leq \
    P \,(X_u \leq 2^{-4} \,M^2) + P \,(X_1 \leq c_- (aM) \ | \ (\mathbf A \cup \mathbf B)^c \ \hbox{and} \ X_u > 2^{-4} \,M^2) \vspace*{4pt} \\ \hspace*{20pt} \leq \
    P \,(\binomial (M^2 / 4, 1/2) \leq 2^{-4} \,M^2) + P \,(\binomial (2^{-4} \,M^2, p_1) \leq c_- (aM)) \vspace*{4pt} \\ \hspace*{20pt} \leq \
    P \,(\binomial (M^2 / 4, 1/2) \leq 2^{-4} \,M^2) + P \,(\binomial (2^{-4} \,M^2, p_1) \leq 2^{-5} \,M^2 \,p_1) \vspace*{4pt} \\ \hspace*{20pt} \leq \
      \exp (- 2^{-6} \,M^2) + \exp (- 2^{-7} \,M^2 \,p_1) \leq (1/2) \exp (- (aM)^{1/2}). \end{array}
\end{equation}
 Using that~$X_1 \leq \zeta_s^1 (B)$ and combining~\eqref{eq:move-3} and~\eqref{eq:move-6}, we conclude that
 $$ \begin{array}{l}
      P^* (\zeta_s^1 (B) \leq c_- (aM)) \leq P^* (\mathbf A \cup \mathbf B) + P \,(\zeta_s^1 (B) \leq c_- (aM) \ | \ (\mathbf A \cup \mathbf B)^c) \vspace*{4pt} \\ \hspace*{20pt}
                                        \leq \ P^* (\mathbf A) + P^* (\mathbf B) + P \,(X_1 \leq c_- (aM) \ | \ (\mathbf A \cup \mathbf B)^c) \vspace*{4pt} \\ \hspace*{20pt}
                                        \leq \ \exp (- 2^{-4} \,aM) + \exp (- 2^{-7} \,M^2) + (1/2) \exp (- (aM)^{1/2}) \leq \exp (- (aM)^{1/2}) \end{array} $$
 for all~$M$ large, which gives~\eqref{eq:move-1}.
 This completes the proof.
\end{proof} \\


\noindent{\bf Creating a pile of players} --
 The next lemma improves the previous one by showing that if the region~$A$ has a large number of type~1 players then the same amount of type~1 players can be created in the nearby region~$B$.
 The idea is to first prove that, as long as the number of type~1 players nearby is small, the number of such players can be increased by a factor~$c_+$.
 Once this threshold is reached, one can find a small box with a large number of type~1 players and apply the previous lemma repeatedly to move a fraction of these players to the target set~$B$.
\begin{lemma} --
\label{lem:create}
 Assume that~\eqref{eq:condition-coex} holds. Then,
 $$ P \,(\zeta_{6s}^1 (B) \leq M \ | \ \zeta_0^1 (A) \geq M) \leq \exp (- M^{1/2}) \quad \hbox{for all~$M$ large.} $$
\end{lemma}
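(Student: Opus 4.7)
The plan is to divide the interval $[0, 6s]$ into six epochs of length $s$: the first five grow the type-$1$ population inside $K_{1/2}$ by the multiplicative factor $c_+$ per epoch, carrying the count from $M$ up to $c_+^5 M$ type-$1$ players, and the sixth applies Lemma~\ref{lem:move} to transport a fraction $c_-$ of the grown population into $B$. The accounting closes because~\eqref{eq:constant-coex} gives $c_- c_+^5 = (25 \cdot 128)^5 / c_-^{24}$, a quantity far exceeding the factor $4$ lost to a pigeonhole step at the end.

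For the growth phase, I would prove by induction on $j = 0, 1, 2, 3, 4$ that
$$ P \,(\zeta_{(j+1)s}^1 (K_{1/2}) \leq c_+^{j+1} M \ | \ \zeta_{js}^1 (K_{1/2}) \geq c_+^j M) \leq \exp (- M^{1/2}) $$
for $M$ large; the base case $j = 0$ follows from $A \subset K_{1/2}$ and the hypothesis of the lemma. Following the template of Lemma~\ref{lem:move}, one conditions on the complement of the two bad events
$$ \mathbf{A}_j := \{\zeta_t^1 (K_{1/2}) \leq 2^{-2} c_+^j M \text{ for some } t \in (js, (j+1)s)\},\ \ \mathbf{B}_j := \{\zeta_t^2 (K_{1/2}) \leq 2^{-5} M^2 \text{ for some } t \in (js, (j+1)s)\} $$
whose probabilities are exponentially small by Lemma~\ref{lem:keep}. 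Since $K_{1/2} \setminus \{x\} \subset N_x$ for every $x \in K_{1/2}$, on $(\mathbf{A}_j \cup \mathbf{B}_j)^c$ each type-$2$ site $x \in K_{1/2}$ has at least $2^{-2} c_+^j M$ type-$1$ neighbors, each contributing at least $a_{12} \cdot 2^{-5} M^2$ to the numerator of $p_{2 \to 1} (x)$. Bounding the denominator by the numerator plus $(a_{21} + a_{22})(2M + 1)^4$ and using $a_{22} \leq 2^{-14} c_+^{-1} \min (a_{12}, a_{21})$ from~\eqref{eq:condition-coex} to absorb the $a_{22}$-term, one obtains
$$ p_{2 \to 1} (x) \geq 2^{12}\, c_+^{j+1}\, M^{-1}, $$
so the expected number of new type-$1$ players in $K_{1/2}$ during the epoch is at least $2^{10} c_+^{j+1} M$, an order of magnitude beyond the target $c_+^{j+1} M$. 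A binomial large-deviation estimate in the spirit of~\eqref{eq:keep-1} and~\eqref{eq:move-5}--\eqref{eq:move-6} then supplies the exponential bound.

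Iterating the growth step and taking a union bound yields $\zeta_{5s}^1 (K_{1/2}) \geq c_+^5 M$ off an event of probability $5 \exp (- M^{1/2})$. Partition $K_{1/2}$ into four disjoint sub-boxes of area $M^2 / 4$; by pigeonhole, some such sub-box $A'$ satisfies $\zeta_{5s}^1 (A') \geq c_+^5 M / 4$. Applying Lemma~\ref{lem:move} to the pair $(A', B)$ over the epoch $[5s, 6s]$ with $aM := c_+^5 M / 4$ gives, off an event of probability $\exp (- (c_+^5 M / 4)^{1/2}) \leq \exp (- M^{1/2})$,
$$ \zeta_{6s}^1 (B) \geq c_- \cdot c_+^5 M / 4 \geq M, $$
and summing the six exceptional probabilities completes the proof after absorbing the factor $6$ into the ``$M$ large'' clause.

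The main obstacle is the growth step, specifically establishing the bound $p_{2 \to 1} (x) \geq 2^{12} c_+^{j+1} / M$ with constants robust across all five iterations. The difficulty is that the $a_{22} N_2 (z)$ contribution from type-$2$ neighbors in the denominator of $p_{2 \to 1}$ is of order $a_{22} M^4$ after summation, which would dominate the $O (c_+^j a_{12} M^3)$ numerator for large $M$ unless $a_{22} / a_{12}$ is forced below $2^{-14} c_+^{-1}$; condition~\eqref{eq:condition-coex} is engineered precisely to achieve this. The exponent $5$ in the definition $c_+ = 5^2 \cdot 2^7 \cdot c_-^{-5}$ is matched to the five growth iterations: without all five, the multiplicative deficit $c_-$ in the single movement step would leave us short of $M$. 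Boundary effects near $\partial K_{1/2}$, where $x$ sees only a portion of $K_{1/2}$ and receives correspondingly fewer type-$1$ neighbors, are a minor bookkeeping issue handled by restricting the effective growth region to a slightly smaller box.
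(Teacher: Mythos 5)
There is a genuine gap, and it sits exactly at the step you flag as the ``main obstacle'': the inductive growth estimate. You claim that, on $(\mathbf A_j \cup \mathbf B_j)^c$, a type-2 player $x \in K_{1/2}$ flips with probability at least $2^{12}\,c_+^{j+1} M^{-1}$, after ``bounding the denominator by the numerator plus $(a_{21} + a_{22})(2M+1)^4$.'' With that denominator bound the numerator $\approx 2^{-7} a_{12}\, c_+^j M^3$ is swamped by the $a_{21} M^4$ term, and what you actually get is of order $(a_{12}/a_{21})\, c_+^j M^{-1} \sim c_-\, c_+^j M^{-1}$, i.e.\ the \emph{move}-type fraction $c_-$ of Lemma~\ref{lem:move}, not a \emph{growth} factor $c_+$; since $c_+ = 5^2 2^7 (c_-)^{-5}$ is enormous, the claimed bound is off by many orders of magnitude, and condition~\eqref{eq:condition-coex} cannot rescue it because it only makes $a_{11}, a_{22}$ small while $a_{21}$ stays large. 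A factor-$c_+$ gain per update is available only when the payoff of the type-2 neighbors is of order $a_{22} M^2$ rather than $a_{21} M^2$, and that requires knowing that the number of type-1 players in the larger box $K_{5/2}$ (which determines $N_1(y)$ for every $y \in K_{3/2}$ entering the rate at $x \in K_{1/2}$) is small, say $O(c_+ M)$. Your hypothesis gives no such control: the lemma conditions only on $\zeta_0^1(A) \geq M$, and if $K_{5/2} \setminus K_{1/2}$ is packed with order $M^2$ type-1 players, type-2 players in $K_{1/2}$ have payoff of order $a_{21} M^2$ and your inductive step is simply false. This is not the ``minor bookkeeping'' boundary issue you mention at the end; it is the crux of the lemma.

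The paper's proof is built precisely around this difficulty, and its structure is different from yours. It introduces the event $\mathbf K = \{\zeta_t^1(K_{5/2}) \geq c_+ M \ \hbox{for some} \ t \in (0,s)\}$ and argues by dichotomy: on $\mathbf K^c$ the type-2 payoffs in $K_{3/2}$ are bounded by $2^3 a_{22} M^2$, so a \emph{single} epoch of the payoff comparison already produces $c_+ M$ type-1 players in $B \subset K_{5/2}$, contradicting $\mathbf K^c$; hence $\mathbf K$ holds with probability $1 - O(e^{-M^{1/2}})$. On $\mathbf K$, a pigeonhole gives a box $B_0 \subset K_{5/2}$ with $5^{-2} c_+ M = 2^7 (c_-)^{-5} M$ type-1 players at some time $t_0 \in (0,s)$, and then Lemma~\ref{lem:move} --- which, unlike the growth estimate, is valid regardless of the surrounding configuration --- is applied \emph{five} times along a chain $B_0, \ldots, B_5 = B$, losing a factor $c_-$ each time but still delivering $2^7 M$ players to $B$; a final application of Lemma~\ref{lem:keep} (with $n = 6$) preserves at least $M$ of them until time exactly $6s$. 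So the roles of $c_-$ and $c_+$ are the reverse of what your plan assumes: growth by $c_+$ is used once, and only under the $\mathbf K^c$ control, while the unconditional step iterated five times is the transport with loss $c_-$. To repair your induction you would have to add a $\mathbf K$-type control on $\zeta_t^1(K_{5/2})$ at every level and handle the alternative (many type-1 players already present somewhere in $K_{5/2}$) by a transport argument --- at which point your proof collapses back into the paper's.
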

\begin{figure}[t]
\centering
\scalebox{0.50}{\input{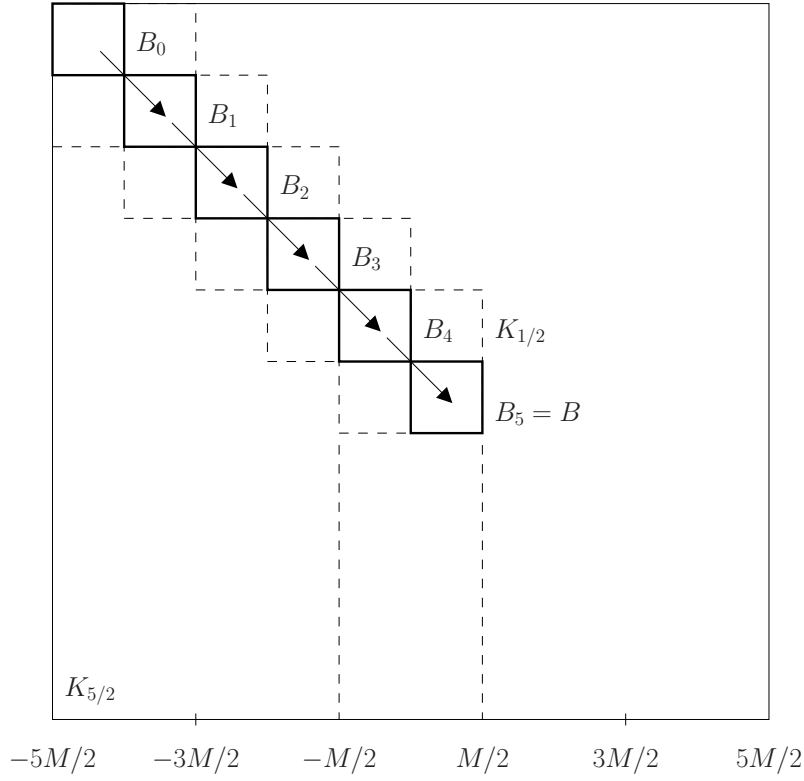}}
\caption{\upshape{Picture related to the proof of Lemma~\ref{lem:create}.}}
\label{fig:coex}
\end{figure}
\begin{proof}
 To keep track of the amount of type~1 and type~2 players in~$K_{5/2}$, which is the key to controlling the payoff of the type~1 players in the set~$A$, we set
 $$ \mathbf K := \{\zeta_t^1 (K_{5/2}) \geq c_+ M \ \hbox{for some time} \ t \in (0, s) \} $$
 where~$c_+$ has been defined in~\eqref{eq:constant-coex}.
 The proof is divided into two steps. \vspace*{5pt} \\
\noindent{\bf Step 1} -- First, we prove that the event~$\mathbf K$ occurs with probability close to one.
 The idea is to show that the complement of this event confers a large payoff to the type~1 players in~$A$, which results in a large
 production of such players with high probability, thus leading to a contradiction.
 To make this argument precise, we observe that, with probability one,
 $$ \zeta_t^2 (x + K_1) \geq M^2 \ \ \hbox{for all} \ x \in A \quad \hbox{and} \quad \zeta_t^1 (K_{5/2}) \leq c_+ M $$
 on the event~$\mathbf K^c$ and for all~$t \in (0, s)$.
 $$ \begin{array}{rcl}
      \phi (x \,| \,\xi_t (x) = 1, \mathbf K^c) & \n \geq \n & a_{12} \,M^2 \quad \hbox{for all} \quad x \in A \vspace*{2pt} \\
      \phi (x \,| \,\xi_t (x) = 2, \mathbf K^c) & \n \leq \n & a_{21} \,c_+ M + a_{22} \,(2M + 1)^2 \leq 2^3 \,a_{22} \,M^2 \quad \hbox{for all} \quad x \in K_{3/2}. \end{array} $$
 In particular, given~$(\mathbf A' \cup \mathbf K)^c$ where~$\mathbf A'$ is the first event~\eqref{eq:move-2} for~$a = 1$, each time a player in~$B$ updates her strategy, she remains/becomes of type~1 with probability at least
\begin{equation}
\label{eq:create-1}
  \begin{array}{rcl}
    q_1 & \n \geq \n & (a_{12} \,M^2)(2^{-2} \,M)((a_{12} \,M^2)(2^{-2} \,M) + (2^3 \,a_{22} \,M^2)(2M + 1)^2)^{-1} \vspace*{4pt} \\
        & \n \geq \n & (a_{12} \,M^2)(2^{-2} \,M)((2^4 \,a_{22} \,M^2)(2M + 1)^2)^{-1} \vspace*{4pt} \\
        & \n \geq \n & 2^{-9} \,(a_{12} / a_{22}) \,M^{-1}. \end{array}
\end{equation}
 Defining~$X_u$ and~$X_1$ as in~\eqref{eq:move-5}, observing that, by~\eqref{eq:condition-coex} and~\eqref{eq:create-1},
 $$ 2^{-5} \,M^2 \,q_1 \geq 2^{-14} \,(a_{12} / a_{22}) \,M \geq c_+ M $$
 and using~\eqref{eq:keep-1}, we deduce that, for all~$M$ sufficiently large,
\begin{equation}
\label{eq:create-2}
  \begin{array}{l}
    P \,(X_1 \leq c_+ M \ | \ (\mathbf A' \cup \mathbf K)^c) \vspace*{4pt} \\ \hspace*{20pt} \leq \
    P \,(X_u \leq 2^{-4} \,M^2) + P \,(X_1 \leq c_+ M \ | \ (\mathbf A' \cup \mathbf K)^c \ \hbox{and} \ X_u > 2^{-4} \,M^2) \vspace*{4pt} \\ \hspace*{20pt} \leq \
    P \,(\binomial (M^2 / 4, 1/2) \leq 2^{-4} \,M^2) + P \,(\binomial (2^{-4} \,M^2, q_1) \leq c_+ M) \vspace*{4pt} \\ \hspace*{20pt} \leq \
    P \,(\binomial (M^2 / 4, 1/2) \leq 2^{-4} \,M^2) + P \,(\binomial (2^{-4} \,M^2, q_1) \leq 2^{-5} \,M^2 \,q_1) \vspace*{4pt} \\ \hspace*{20pt} \leq \
      \exp (- 2^{-6} \,M^2) + \exp (- 2^{-7} \,M^2 \,q_1) \leq (1/4) \exp (- M^{1/2}). \end{array}
\end{equation}
 Using again~$X_1 \leq \zeta_s^1 (B)$ and combining~\eqref{eq:move-3} with~$a = 1$ and~\eqref{eq:create-2}, we get
\begin{equation}
\label{eq:create-3}
  \begin{array}{rcl}
    P \,(\mathbf K^c \ | \ \zeta_0^1 (A) \geq M) & \n = \n & P \,(\zeta_s^1 (B) \leq c_+ M \ \hbox{and} \ \mathbf K^c \ | \ \zeta_0^1 (A) \geq M) \vspace*{4pt} \\ \hspace*{25pt}
                                                 & \n \leq \n & P \,(\mathbf A' \ | \ \zeta_0^1 (A) \geq M) + P \,(\zeta_s^1 (B) \leq c_+ M \ | \ (\mathbf A' \cup \mathbf K)^c) \vspace*{4pt} \\ \hspace*{25pt}
                                                 & \n \leq \n & \exp (- 2^{-4} \,M) + (1/4) \exp (- M^{1/2}) \vspace*{4pt} \\ \hspace*{25pt}
                                                 & \n \leq \n & (1/2) \exp (- M^{1/2}) \end{array}
\end{equation}
 for all dispersal range~$M$ sufficiently large. \vspace*{5pt} \\
\noindent{\bf Step 2} -- Now, given~$\mathbf K$, there is a box with at least~$5^{-2} \,c_+ M$ type~1 players.
 One can move a fraction of these players to the target set~$B$ in at most five steps, applying Lemma~\ref{lem:move}.
 To begin with, we observe that some basic geometry implies that, given~$\mathbf K$, there exist
 $$ B_0, \ldots, B_5 \subset K_{5/2} \quad \hbox{and} \quad t_0 \in (0, s) $$
 such that the following three conditions hold:
\begin{itemize}
 \item[(a)] We have~$\zeta_{t_0}^1 (B_0) \geq 5^{-2} \,c_+ M$. \vspace*{4pt}
 \item[(b)] For~$k = 0, \ldots, 5$, we have~$\card (B_k) = 2^{-2} \,M^2$ with~$B_5 = B$. \vspace*{4pt}
 \item[(c)] For~$k = 0, \ldots, 4$, we have~$\max_{j = 1, 2} |x_j - y_j| \leq M$ for all~$(x, y) \in B_k \times B_{k + 1}$.
\end{itemize}
 We refer to Figure~\ref{fig:coex} for an illustration of the worst case scenario where all the type~1 players are located in one of the corners of~$K_{5/2}$.
 Under these conditions, we can bring type~1 players to our target set in at most five steps applying repeatedly Lemma~\ref{lem:move}. Indeed,
\begin{equation}
\label{eq:create-4}
  \begin{array}{l}
    P \,(\zeta_t^1 (B) \leq 2^7 M \ \hbox{for all} \ t \in (0, 6s) \ | \ \mathbf K) \vspace*{4pt} \\ \hspace*{25pt} \leq \
    P \,(\zeta_{t_0 + 5s}^1 (B_5) \leq 2^7 M \ | \ \zeta_{t_0}^1 (B_0) \geq 5^{-2} \,c_+ M = 2^7 (c_-)^{-5} \,M) \vspace*{4pt} \\ \hspace*{25pt} \leq \
    1 - \prod_{k = 0, 1, 2, 3, 4} \,P \,(\zeta_s^1 (B_{k + 1}) \geq 2^7 (c_-)^{k - 4} \,M \ | \ \zeta_0^1 (B_k) \geq 2^7 (c_-)^{k - 5} \,M) \vspace*{4pt} \\ \hspace*{25pt} \leq \
    1 - \prod_{k = 0, 1, 2, 3, 4} \ (1 - \exp (- 2^{7/2} \,(c_-)^{(k - 5) / 2} \,M^{1/2})) \vspace*{4pt} \\ \hspace*{25pt} \leq \
    1 - (1 - \exp (- 2^{7/2} \,M^{1/2}))^5 \vspace*{4pt} \\ \hspace*{25pt} \leq \
    5 \times \exp (- 2^{7/2} \,M^{1/2}) \leq (1/4) \exp (- M^{1/2}) \end{array}
\end{equation}
 for all dispersal range~$M$ sufficiently large. \vspace*{5pt} \\
\noindent{\bf Conclusion} --
 Combining~\eqref{eq:create-3}--\eqref{eq:create-4}, we deduce that
 $$ \begin{array}{l}
      P \,(\zeta_t^1 (B) \leq 2^7 M \ \hbox{for all} \ t \in (0, 6s) \ | \ \zeta_0^1 (A) \geq M) \vspace*{4pt} \\ \hspace*{25pt} \leq \
      P \,(\mathbf K^c \ | \ \zeta_0^1 (A) \geq M) + P \,(\zeta_t^1 (B) \leq 2^7 M \ \hbox{for all} \ t \in (0, 6s) \ | \ \mathbf K) \vspace*{4pt} \\ \hspace*{25pt} \leq \
      (1/2) \exp (- M^{1/2}) + (1/4) \exp (- M^{1/2}) = (3/4) \exp (- M^{1/2}) \end{array} $$
 which, applying Lemma~\ref{lem:keep} with~$n = 6$ and~$K = 2^7 M$, implies that
 $$ \begin{array}{l}
      P \,(\zeta_{6s}^1 (B) \leq M \ | \ \zeta_0^1 (A) \geq M) \vspace*{4pt} \\ \hspace*{25pt} \leq \
      P \,(\zeta_t^1 (B) \leq 2^7 M \ \hbox{for all} \ t \in (0, 6s) \ | \ \zeta_0^1 (A) \geq M) \vspace*{4pt} \\ \hspace*{60pt} + \
      P \,(\zeta_{6s}^1 (B) \leq 2^{-7} \,2^7 M \ | \ \zeta_t^1 (B) \geq 2^7 M \ \hbox{for some} \ t \in (0, 6s)) \vspace*{4pt} \\ \hspace*{25pt} \leq \
      (3/4) \exp (- M^{1/2}) + \exp (- 2^{-2} \,M) \leq \exp (- M^{1/2}) \end{array} $$
 for all~$M$ large.
 This completes the proof.
\end{proof} \\


\noindent{\bf Block construction} --
 To deduce coexistence, we use Lemma~\ref{lem:create} in combination with some obvious symmetry and a block construction.
 The idea of the block construction is to define a coupling between the process under consideration properly rescaled in space and time and supercritical percolation.
 More precisely, let~$\H$ be the directed graph with vertex set
 $$ H := \{(z, n) \in \Z^2 \times \Z_+ : z_1 + z_2 + n \ \hbox{is even} \} $$
 and in which there is an oriented edge
 $$ \begin{array}{l}
     (z, n) \to (z', n') \quad \hbox{if and only if} \quad (z' = z \pm e_1 \ \hbox{or} \ z' = z \pm e_2) \ \ \hbox{and} \ \ n' = n + 1 \end{array} $$
 where $e_j$ is the $j$th unit vector.
 Then, we consider the 14 dependent oriented site percolation process with density equal to~$1 - \ep$ on this directed graph, i.e., we assume that
 $$ P \,((z_i, n_i) \ \hbox{is closed for} \ i = 1, 2, \ldots, m) = \ep^m $$
 whenever~$|z_i - z_j| \vee |n_i - n_j| > 14$ for~$i \neq j$.
 Then, we set
 $$ B_z := (M / 2) \,z + [- M/4, M/4)^2 \ \ \hbox{for all} \ \ z \in \Z^2 \quad \hbox{and} \quad T := 6s = 6 \,\ln (2) $$
 and declare site~$(z, n) \in H$ to be good whenever
 $$ \zeta_{nT}^i (B_z) = \card \,\{x \in B_z : \xi_{nT} (x) = i \} \geq M \quad \hbox{for} \quad i = 1, 2. $$
 Finally, for all~$n \in \N$, we define
 $$ W_n^{\ep} := \{z : (z, n) \ \hbox{is wet} \} \quad \hbox{and} \quad X_n := \{z : (z, n) \ \hbox{is good} \}. $$
 The next lemma shows that, for all~$\ep > 0$, one can find a sufficiently large dispersal range such that the set of good sites dominates stochastically the set of wet sites.
 In view of the definition of a good site, this will imply coexistence of both strategies.
\begin{lemma} --
\label{lem:coex-perco}
 Assume~\eqref{eq:condition-coex} and fix~$\ep > 0$.
 Then, for all~$M$ large,
  $$ P \,(z \in W_n^{\ep}) \leq P \,(z \in X_n) \quad \hbox{for all} \quad (z, n) \in H \quad \hbox{whenever} \quad W_0^{\ep} \subset X_0. $$
\end{lemma}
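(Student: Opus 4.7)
The plan is to verify the two standard ingredients of a block construction — a local high-probability estimate that each good site produces good descendants, and a finite-range dependence structure compatible with the $14$-dependent percolation model — and then invoke the Liggett-Schonmann-Stacey domination theorem for $k$-dependent Bernoulli fields. Given a good site $(z, n) \in X_n$, meaning $\zeta_{nT}^i (B_z) \geq M$ for $i = 1, 2$, I would show that each of the four potential descendants $(z \pm e_j, n + 1)$ is good with probability at least $1 - 2 \exp(- M^{1/2})$, which can be made less than any prescribed $\ep > 0$ by taking $M$ large.

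For the local estimate, the key geometric observation is that the blocks $B_z$ and $B_{z \pm e_j}$, whose centers are at distance $M / 2$ apart and which have half-width $M / 4$, satisfy $\max_{k = 1, 2} |x_k - y_k| \leq M$ for every pair $(x, y)$ across the two blocks; this is precisely the hypothesis stated at the beginning of the section that permits applying Lemma~\ref{lem:create}. Taking $A = B_z$ and $B = B_{z \pm e_j}$, Lemma~\ref{lem:create} gives $\zeta_{(n + 1)T}^1 (B_{z \pm e_j}) \geq M$ with probability at least $1 - \exp(- M^{1/2})$. Since the coexistence condition~\eqref{eq:condition-coex} is symmetric under swapping strategies $1$ and $2$, the same proof yields the analogous statement for strategy $2$, and a union bound delivers the $1 - 2 \exp(- M^{1/2})$ control on the event that \emph{both} strategies have at least $M$ representatives in $B_{z \pm e_j}$ at time $(n + 1) T$, i.e., on the event that $(z \pm e_j, n + 1) \in X_{n + 1}$.

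For the dependence structure, I would observe that whether $(z, n) \in X_n$ is determined by the Poisson graphical representation inside a bounded space-time region around $B_z$: the blocks have side $M / 2$, the interaction range is $M$, and one step has length $T = 6 \ln 2$. A direct bookkeeping in units of $M$ and $T$ then shows that the events attached to sites $(z, n)$ and $(z', n')$ with $|z - z'| \vee |n - n'| > 14$ are measurable with respect to disjoint portions of the graphical representation and hence independent. Combined with the local estimate above, the standard Liggett-Schonmann-Stacey domination theorem yields the desired $P (z \in W_n^{\ep}) \leq P (z \in X_n)$ as soon as $2 \exp(- M^{1/2}) < \ep$, i.e., for all $M$ sufficiently large.

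The main obstacle is the finite-range dependence claim, because in continuous time a chain of updates can in principle propagate information arbitrarily far within a fixed time window $T$. The standard remedy is either to thicken the good event by a rare-event truncation that excludes graphical configurations with very long update chains, or to exploit the rate-one Poisson structure directly to bound such chains probabilistically; once this technicality is dispensed with, the rest of the proof is just the combination of Lemma~\ref{lem:create} with its symmetric analog and the usual comparison with oriented percolation.
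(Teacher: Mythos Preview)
Your proposal is correct and matches the paper's approach: apply Lemma~\ref{lem:create} from $B_z$ to each of the four target blocks $B_{z\pm e_j}$ and to each strategy (using the symmetry of~\eqref{eq:condition-coex}), union-bound, and then compare with $14$-dependent oriented percolation. The paper invokes Durrett's Theorem~4.3 from~\cite{durrett_1995} rather than Liggett--Schonmann--Stacey, but these play the same role here.

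Where the paper differs from your sketch is in resolving the finite-range dependence issue you flag at the end. The paper does not truncate long update chains; instead it observes that all the payoff bounds underlying the proof of Lemma~\ref{lem:create} hold uniformly over the configuration outside~$K_{5/2}$, so that proof already furnishes an auxiliary event~$\mathbf G(z,n)$, measurable with respect to the graphical representation in the box $((M/2)z, nT) + (K_{7/2} \times [0,T])$, of probability at least $1 - \ep$, and satisfying $\mathbf G(z,n) \cap \{z \in X_n\} \subset \{z \pm e_j \in X_{n+1}\ \hbox{for}\ j=1,2\}$. Note the distinction from your first formulation: it is $\mathbf G(z,n)$, not the good-site event $\{z \in X_n\}$ itself, that is locally measurable. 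Disjointness of these boxes for $|z - z'| \vee |n - n'| \geq 14$ then gives the required dependence structure directly, with no chain-of-updates truncation needed.
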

\begin{proof}
 First, we define the collection of events
 $$ \mathbf B_i (z, n) := \{\zeta_{nT}^i (B_z) \geq M \} \quad \hbox{for all} \quad (z, n) \in H \ \hbox{and} \ i = 1, 2. $$
 Then, since for~$j = 1, 2$,
 $$ B_z, B_{z \pm e_j} \subset ((M/2) \,z \pm (M/4) \,e_j) + K_{1/2} \quad \hbox{and} \quad \card (B_z) = \card (B_{z \pm e_j}) = 2^{-2} \,M^2 $$
 we can apply Lemma~\ref{lem:create} to get
 $$ \begin{array}{l}
      P \,(\mathbf B_1 (z \pm e_j, n + 1) \ \hbox{for} \ j = 1, 2 \ | \ \mathbf B_1 (z, n)) \vspace*{4pt} \\ \hspace*{25pt} = \
      P \,(\zeta_{(n + 1) T}^1 (B_{z \pm e_j}) \geq M \ \hbox{for} \ j = 1, 2 \ | \ \zeta_{nT}^1 (B_z) \geq M) \vspace*{4pt} \\ \hspace*{25pt} \geq \
      1 - 4 \times P \,(\zeta_{(n + 1) T}^1 (B_{z + e_1}) \geq M \ | \ \zeta_{nT}^1 (B_z) \geq M) \geq 1 - 4 \times \exp (- M^{1/2}) \vspace*{4pt} \\ \hspace*{25pt} \geq \
      1 - \ep/2 \end{array} $$
 for all~$M$ large.
 By symmetry, the same holds for strategy~2, therefore
\begin{equation}
\label{eq:coex-perco-1}
\begin{array}{l}
   P \,(z \pm e_j \in X_{n + 1} \ \hbox{for} \ j = 1, 2 \ | \ z \in X_n) \vspace*{4pt} \\ \hspace*{25pt} = \
   P \,(\mathbf B_1 (z \pm e_j,  n + 1) \cap \mathbf B_2 (z \pm e_j, n + 1) \ \hbox{for} \ j = 1, 2 \ | \ \mathbf B_1 (z, n) \cap \mathbf B_2 (z, n)) \vspace*{4pt} \\ \hspace*{25pt} \geq \
   - \ 1 + P \,(\mathbf B_1 (z \pm e_j,  n + 1) \ \hbox{for} \ j = 1, 2 \ | \ \mathbf B_1 (z, n) \cap \mathbf B_2 (z, n)) \vspace*{4pt} \\ \hspace*{100pt} + \
           P \,(\mathbf B_2 (z \pm e_j,  n + 1) \ \hbox{for} \ j = 1, 2 \ | \ \mathbf B_1 (z, n) \cap \mathbf B_2 (z, n)) \vspace*{4pt} \\ \hspace*{25pt} \geq \
   - \ 1 + 2 \,(1 - \ep/2) = 1 - \ep. \end{array}
\end{equation}
 Since in addition all the estimates in Lemma~\ref{lem:create} hold regardless of the configuration outside the spatial region~$K_{5/2}$, we deduce from~\eqref{eq:coex-perco-1} the existence of a collection of events
 $$ \mathbf G (z, n) \quad \hbox{for all} \quad (z, n) \in H $$
 that satisfy the following three properties:
\begin{itemize}
 \item[(a)] The event~$\mathbf G (z, n)$ is measurable with respect to the graphical representation in
  $$ R (z, n) := ((M/2) z, nT) + (K_{7/2} \times [0, T]). $$
 \item[(b)] For all~$M$ large, we have~$P \,(\mathbf G (z, n)) \geq 1 - \ep$. \vspace*{4pt}
 \item[(c)] We have the inclusion~$\mathbf G (z, n) \cap \{z \in X_n \} \subset \{z \pm e_j \in X_{n + 1} \ \hbox{for} \ j = 1, 2 \}$.
\end{itemize}
 Observing also that
 $$ R (z, n) \cap R (z', n') = \varnothing \quad \hbox{when} \quad |z - z'| \vee |n - n'| \geq 2 \times 7 = 14, $$
 we deduce from~\cite[Theorem 4.3]{durrett_1995} the existence of a coupling between the long range death-birth process and the oriented site percolation process such that
 $$ P \,(W_n^{\ep} \subset X_n) = 1 \quad \hbox{whenever} \quad W_0^{\ep} \subset X_0. $$
 The lemma directly follows from the existence of this coupling.
\end{proof} \\ \\
 To conclude, we fix~$\ep > 0$ small enough to make the percolation process supercritical.
 Observing that, starting from a product measure with a positive density of both strategies, the number of good sites at level zero is almost surely infinite, we deduce that
 $$ \begin{array}{l} \liminf_{n \to \infty} \,P \,(0 \in W_{2n} \ | \ W_0 = X_0) > 0. \end{array} $$
 This, together with Lemma~\ref{lem:coex-perco}, implies that, for all~$M$ large,
 $$ \begin{array}{l}
    \liminf_{t \to \infty} \,P \,(\xi_t (x) = 1) \,P \,(\xi_t (x) = 2) \vspace*{4pt} \\ \hspace*{20pt} = \
    \liminf_{t \to \infty} \,P \,(\xi_t (0) = 1) \,P \,(\xi_t (0) = 2) \vspace*{4pt} \\ \hspace*{20pt} \geq \
    \liminf_{t \to \infty} \,\prod_{i = 1, 2} \,P \,(\xi_t (0) = i \ | \ 0 \in W_{2 \floor{t/T}}) \,P \,(0 \in W_{2 \floor{t/T}} \ | \ W_0 = X_0) > 0 \end{array} $$
 for all~$x \in \Z^2$.
 This completes the proof of Theorem~\ref{th:coex}.


\section{Coupling with modified voter models}
\label{sec:coupling}

\indent This section is devoted to the proof of Theorem~\ref{th:coupling}.
 The common ingredient to prove both parts of the theorem is to couple the process with the modified voter models~$\zeta_t^1$ and~$\zeta_t^2$ whose transitions at vertex~$x$ are given by the following expressions
 $$ \begin{array}{rcl}
     i \ \to \ j & \hbox{at rate} & c_{i \to j} (x, \zeta^i) := (1 - \ep) \,f_j (x, \zeta^i) + \ep \ \ind \{f_i (x, \zeta^i) = 0 \} \vspace*{4pt} \\
     j \ \to \ i & \hbox{at rate} & c_{j \to i} (x, \zeta^i) := (1 - \ep) \,f_i (x, \zeta^i) + \ep \ \ind \{f_i (x, \zeta^i) \neq 0 \}. \end{array} $$
 for~$\{i, j \} = \{1, 2 \}$ and where
 $$ f_j (x, \zeta^i) = \card \,\{y \in N_x : \zeta^i (x) = j \} / \card N_x = (1/N) \,N_j (x, \zeta^i) $$
 denotes the fraction of neighbors of vertex~$x$ in state~$j$.
 In words, the transition rates indicate that particles are updated at rate one and that, at the time of an update,
\begin{itemize}
 \item with probability $1 - \ep > 0$, the new type is chosen uniformly at random from the interaction neighborhood just like in the voter model, \vspace{4pt}
 \item with probability $\ep > 0$, the new type is~$i$ unless all the neighbors are of type~$j$.
\end{itemize}
 The results of~\cite[section~3]{lanchier_2013} show using duality that type~$i$ particles win for this process.
 In particular, to prove that strategy~1 wins, it suffices to prove that the set of type~1 players in the death-birth process dominates stochastically its counterpart in~$\zeta_t^1$, which follows from
 $$ \begin{array}{l}
    \xi \leq \zeta^1 \ \ \hbox{and} \ \ \xi (x) = \zeta^1 (x) \quad \hbox{implies that} \vspace*{4pt} \\ \hspace*{90pt}
     p_{1 \to 2} (x, \xi) \leq c_{1 \to 2} (x, \zeta^1) \ \ \hbox{and} \ \ p_{2 \to 1} (x, \xi) \geq c_{2 \to 1} (x, \zeta^1) \end{array} $$
 according to Theorem~III.1.5 in~\cite{liggett_1985}.
 Since in addition the transition rates of the modified voter models are monotone with respect to the number of neighbors of each type, in order to show that strategy~1 wins, it suffices to prove that the simplified implication
\begin{equation}
\label{eq:voter-1}
  \begin{array}{l}
    N_1 (x, \xi) = N_1 (x, \zeta^1) \ \ \hbox{and} \ \ \xi (x) = \zeta^1 (x) \quad \hbox{implies that} \vspace*{4pt} \\ \hspace*{90pt}
      p_{1 \to 2} (x, \xi) \leq c_{1 \to 2} (x, \zeta^1) \ \ \hbox{and} \ \ p_{2 \to 1} (x, \xi) \geq c_{2 \to 1} (x, \zeta^1) \end{array}
\end{equation}
 holds for some~$\ep > 0$.
 By symmetry, strategy~2 wins if the implication
\begin{equation}
\label{eq:voter-2}
  \begin{array}{l}
    N_2 (x, \xi) = N_2 (x, \zeta^2) \ \ \hbox{and} \ \ \xi (x) = \zeta^2 (x) \quad \hbox{implies that} \vspace*{4pt} \\ \hspace*{90pt}
      p_{1 \to 2} (x, \xi) \geq c_{1 \to 2} (x, \zeta^2) \ \ \hbox{and} \ \ p_{2 \to 1} (x, \xi) \leq c_{2 \to 1} (x, \zeta^2) \end{array}
\end{equation}
 holds for some~$\ep > 0$.
 Using~\eqref{eq:voter-1}, we now prove Theorem~\ref{th:coupling}.a.
\begin{lemma} --
\label{lem:1-wins}
 Recall that~$a_{21} > a_{12}$.
 Then, strategy~1 wins when
 $$ \min \,(a_{12} - a_{22}, a_{11} - a_{21}) > (N - 1)(a_{21} - a_{12}). $$
\end{lemma}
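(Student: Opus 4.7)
The plan is to verify the implication~\eqref{eq:voter-1} for some $\ep>0$; by the reduction already set up in the paper, this yields the lemma. Fix a vertex~$x$ and configurations~$\xi,\zeta^1$ with $N_1(x,\xi)=N_1(x,\zeta^1)=:k$ and $\xi(x)=\zeta^1(x)$, and write $\Phi_i:=\sum_{y\in N_x}\phi(y,\xi)\,\ind\{\xi(y)=i\}$ for $i=1,2$. Since $p_{1\to 2}+p_{2\to 1}\equiv 1$ and the same is true of $c_{1\to 2}+c_{2\to 1}$, the two inequalities in~\eqref{eq:voter-1} are equivalent; the cases $k\in\{0,N\}$ are trivial (both sides reduce to $0$ or $1$), so I need only check, for $0<k<N$, the rearranged inequality
\begin{equation*}
 \Phi_2\,(k+\ep(N-k)) \;\le\; (1-\ep)(N-k)\,\Phi_1.
\end{equation*}

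The subtlety is that $\Phi_1$ and $\Phi_2$ depend on the configuration outside~$N_x$, so the bounds must be uniform. The only piece of information available is that $x\in N_y$ for every $y\in N_x$, which pins down the type of exactly one of $y$'s $N$ neighbors. I split on $\xi(x)$. If $\xi(x)=1$, every type-$1$ neighbor $y$ of $x$ has at least one type-$1$ neighbor (namely $x$), so using $a_{11}>a_{12}$, $\phi(y,\xi)\ge a_{11}+a_{12}(N-1)$, while every type-$2$ neighbor of $x$ admits only the trivial upper bound $\phi(y,\xi)\le a_{21}N$. Substituting and taking the worst case over $(k,N-k)$, which is attained at $k=1$ by maximizing $(N-k)/k$, the problem reduces to
\begin{equation*}
 a_{21}N\,(1+\ep(N-1)) \;\le\; (1-\ep)\bigl(a_{11}+a_{12}(N-1)\bigr),
\end{equation*}
which at $\ep=0$ is precisely $a_{11}-a_{21}\ge (N-1)(a_{21}-a_{12})$, strictly satisfied by hypothesis.

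The case $\xi(x)=2$ is symmetric: each type-$2$ neighbor $y$ of $x$ now has at least one type-$2$ neighbor, so $\phi(y,\xi)\le a_{22}+a_{21}(N-1)$, while type-$1$ neighbors give only $\phi(y,\xi)\ge a_{12}N$. The worst-case $k=1$ at $\ep=0$ yields the companion condition $a_{12}-a_{22}\ge (N-1)(a_{21}-a_{12})$, again strictly satisfied. Since both conditions are strict, continuity in $\ep$ lets me pick a common $\ep>0$ for which~\eqref{eq:voter-1} holds uniformly in $x,\xi,\zeta^1$, completing the proof. The point to watch is that the worst-case configuration is the ``lonely type-$1$ neighbor'' scenario $k=1$, and that the single-slot information $x\in N_y$ provides exactly the leverage needed to close the $(N-1)(a_{21}-a_{12})$-gap demanded by the hypotheses; without using the type of $x$ one would only recover the weaker and generally useless condition $a_{11}-a_{21}\ge N(a_{21}-a_{12})/1$ via $\phi(y)\ge a_{12}N$ for type-$1$ players.
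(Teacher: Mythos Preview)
Your proof is correct and follows essentially the same approach as the paper's: both verify \eqref{eq:voter-1} by bounding the neighbors' payoffs via the single piece of information $x\in N_y$ (i.e., the type of the focal vertex), and both cases produce exactly the conditions $a_{11}-a_{21}>(N-1)(a_{21}-a_{12})$ and $a_{12}-a_{22}>(N-1)(a_{21}-a_{12})$. Your write-up is a bit more streamlined---observing $p_{1\to2}+p_{2\to1}=c_{1\to2}+c_{2\to1}=1$ collapses the two inequalities of \eqref{eq:voter-1} into one, and you use per-neighbor worst-case payoff bounds directly rather than the paper's aggregate weights $w_{ij}$---but the substance is the same.
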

\begin{proof}
 In view of the discussion above, it suffices to prove that~\eqref{eq:voter-1} holds.
 First, we observe that, when the fraction of type~1 neighbors of vertex~$x$ is equal to either zero or one, the transition rates are the same for both processes so it remains to prove~\eqref{eq:voter-1} under the assumption
\begin{equation}
\label{eq:1-wins-1}
  N_1 N_2 \neq 0 \quad \hbox{where} \quad N_j := N_j (x, \xi) = N_j (x, \zeta^1) \quad \hbox{for} \quad j = 1, 2.
\end{equation}
 The transition rate at vertex~$x$ depends on the payoff of its neighbors, and the main idea is to express the transition rates by distinguishing between the part of the payoff coming from~$x$ and
 the part of the payoff coming from the other neighbors' neighbors.
 In order to make this distinction, we introduce the following four weighting factors:
 $$ \begin{array}{l} w_{ij} := \sum_{y \sim x} \,(\ind \{\xi (y) = i \} \,\sum_{z \sim y, z \neq x} \,\ind \{\xi (z) = j \}) \quad \hbox{for} \quad i, j = 1, 2. \end{array} $$
 That is, $w_{ij}$ is the number of type~$j$ neighbors (excluding vertex~$x$) of a type~$i$ neighbor of vertex~$x$ counted with order of multiplicity.
 Note that, for~$i = 1, 2$, we have
\begin{equation}
\label{eq:1-wins-2}
  \begin{array}{rcl}
    N_i + \sum_{j = 1, 2} \,w_{ij} & \n = \n & N_i + \sum_{y \sim x} \,(\ind \{\xi (y) = i \} \,\sum_{z \sim y} \,\ind \{z \neq x \}) \vspace*{4pt} \\
                                   & \n = \n & N_i + (N - 1) \,\sum_{y \sim x} \,\ind \{\xi (y) = i \} \vspace*{4pt} \\
                                   & \n = \n & N_i + (N - 1) \,N_i = NN_i. \end{array}
\end{equation}
 In addition, for~$i \neq j$, the transition rates can be expressed as
\begin{equation}
\label{eq:1-wins-3}
  p_{i \to j} (x, \xi) = \frac{(N_j + w_{ji}) \,a_{ji} + w_{jj} \,a_{jj}}{(N_i + w_{ii}) \,a_{ii} + w_{ij} \,a_{ij} + (N_j + w_{ji}) \,a_{ji} + w_{jj} \,a_{jj}}.
\end{equation}
 Using~\eqref{eq:1-wins-2}--\eqref{eq:1-wins-3}, we now prove that~\eqref{eq:voter-1} holds in the nontrivial case~\eqref{eq:1-wins-1}. \vspace*{5pt} \\
\noindent{\bf Transition~$1 \to 2$} --
 Using~\eqref{eq:1-wins-2} and~$a_{22} < a_{12} < a_{21}$, we get
 $$ \begin{array}{rcl}
     (N_2 + w_{21}) \,a_{21} + w_{22} \,a_{22} & \n \leq \n & (N_2 + w_{21} + w_{22}) \,a_{21} = NN_2 \,a_{21} \vspace*{4pt} \\
     (N_1 + w_{11}) \,a_{11} + w_{12} \,a_{12} & \n = \n & (N_1 + w_{11} + w_{12}) \,a_{11} + w_{12} \,(a_{12} - a_{11}) \vspace*{4pt} \\
                                               & \n = \n & NN_1 \,a_{11} + w_{12} \,(a_{12} - a_{11}). \end{array} $$
 This, together with~\eqref{eq:1-wins-3} for~$i = 1$ and~$j = 2$, implies that
\begin{equation}
\label{eq:1-wins-4}
  \begin{array}{rcl}
    p_{1 \to 2} (x, \xi) & \n \leq \n & \displaystyle \frac{NN_2 \,a_{21}}{NN_1 \,a_{11} + w_{12} \,(a_{12} - a_{11}) + NN_2 \,a_{21}} \vspace*{10pt} \\
                         & \n    = \n & \displaystyle \frac{NN_2 \,a_{21}}{NN_1 \,(a_{11} - a_{21}) + w_{12} \,(a_{12} - a_{11}) + N^2 \,a_{21}} = \frac{NN_2 \,a_{21}}{N^2 a_{21} + \rho_1} \end{array}
\end{equation}
 where, since~$a_{11} - a_{21} > (N - 1)(a_{21} - a_{12})$,
\begin{equation}
\label{eq:1-wins-5}
  \begin{array}{rrl}
    \rho_1 & \n := \n & NN_1 \,(a_{11} - a_{21}) + w_{12} \,(a_{12} - a_{11}) \vspace*{4pt} \\
           & \n  = \n & (NN_1 - w_{12})(a_{11} - a_{21}) + w_{12} \,(a_{12} - a_{21}) \vspace*{4pt} \\
           & \n  > \n & (N - 1)(NN_1 - w_{12})(a_{21} - a_{12}) + w_{12} \,(a_{12} - a_{21}) \vspace*{4pt} \\
           & \n  = \n & N \,((N - 1) \,N_1 - w_{12})(a_{21} - a_{12}) = N \,w_{11} \,(a_{21} - a_{12}) \geq 0. \end{array}
\end{equation}
 Note that the strict inequality holds because~\eqref{eq:1-wins-1}--\eqref{eq:1-wins-2} imply that
 $$ NN_1 - w_{12} = (N_1 + w_{11} + w_{12}) - w_{12} = N_1 + w_{11} \geq N_1 > 0. $$
 In view of~\eqref{eq:1-wins-4}--\eqref{eq:1-wins-5}, there exists~$\ep_1 > 0$ small such that
 $$ \begin{array}{rcl}
      p_{1 \to 2} (x, \xi) & \n \leq \n & NN_2 \,a_{21} \,(N^2 a_{21} + \rho_1)^{-1} = N_2 \,(N + (\rho_1 / Na_{21}))^{-1} \vspace*{4pt} \\
                           & \n \leq \n & (1 - \ep) \,f_2 (x, \xi) = (1 - \ep) \,f_2 (x, \zeta^1) = c_{1 \to 2} (x, \zeta^1) \end{array} $$
 whenever~$\ep < \ep_1$ and~\eqref{eq:1-wins-1} holds. \vspace*{5pt} \\
\noindent{\bf Transition~$2 \to 1$} --
 Since~$a_{11} > a_{21} > a_{12}$ and~$a_{22} - a_{12} < (N - 1)(a_{12} - a_{21})$, using the previous estimates and obvious symmetry, we show that
 $$ p_{2 \to 1} (x, \xi) \geq NN_1 \,a_{12} \,(N^2 a_{12} + \rho_2)^{-1} \quad \hbox{where} \quad \rho_2 < N \,w_{22} \,(a_{12} - a_{21}) \leq 0. $$
 In particular, there exists~$\ep_2 > 0$ small such that
 $$ \begin{array}{rcl}
      p_{2 \to 1} (x, \xi) & \n \geq \n & N_1 \,(N + (\rho_2 / Na_{12}))^{-1} \vspace*{4pt} \\
                           & \n \geq \n & (1 - \ep) \,f_1 (x, \xi) + \ep = (1 - \ep) \,f_1 (x, \zeta^1) + \ep = c_{2 \to 1} (x, \zeta^1) \end{array} $$
 whenever~$\ep < \ep_2$ and~\eqref{eq:1-wins-1} holds. \vspace*{5pt} \\
 In conclusion, the implication~\eqref{eq:voter-1} holds for~$\ep$ smaller than~$\min \,(\ep_1, \ep_2) > 0$, which shows that strategy~1 wins under the assumptions of the lemma.
\end{proof} \\ \\
 Repeating the proof of Lemma~\ref{lem:1-wins} step by step but exchanging the role of both strategies only shows that strategy~2 wins under the strong assumption~$a_{11} < a_{12} < a_{21} < a_{22}$.
 In fact, this sufficient condition for strategy~2 to win can be easily improved to
 $$ \max \,(a_{11}, a_{12}) < \min \,(a_{21}, a_{22}) $$
 by using a coupling between the death-birth process and a biased voter model with a selective advantage for type~2 particles to show that the former dominates the latter.
 To prove that strategy~2 wins in the larger region stated in Theorem~\ref{th:coupling}.b, we couple the death-birth process with the second modified voter model~$\zeta_t^2$ but using techniques different from the ones used
 to show the first part of the theorem.
 To explain the assumptions of the theorem, we note that our approach requires the interaction neighborhood to have a certain connectivity property which does not hold in the one-dimensional nearest neighbor case.
 First, we introduce the payoff functions
 $$ \begin{array}{rcl}
    \phi_1 (z) & \n := \n & a_{11} \,(z/N) + a_{12} \,(1 - z/N) = (a_{11} - a_{12})(z/N) + a_{12} \vspace*{3pt} \\
    \phi_2 (z) & \n := \n & a_{22} \,(z/N) + a_{21} \,(1 - z/N) = (a_{22} - a_{21})(z/N) + a_{21} \end{array} $$
 for all~$z = 0, 1, \ldots, N$, and let~$a := \max \,(a_{11}, a_{12}, a_{21}, a_{22})$ and
 $$ \begin{array}{rcl}
      M_+ := \max_z \,\phi_1 (z) & \n \geq \n & \max_{z \neq N} \,\phi_1 (z) =: M_-  \vspace*{4pt} \\
      m_- := \min_z \,\phi_2 (z) & \n \leq \n & \min_{z \neq N} \,\phi_2 (z) =: m_+. \end{array} $$
 The following lemma gives a sufficient condition on these minimum and maximum payoffs for strategy~2 to win.
 This condition is made more explicit in the subsequent lemma.
\begin{lemma} --
\label{lem:2-wins}
 Strategy~2 wins whenever~$(M, d) \neq (1, 1)$ and
\begin{equation}
\label{eq:2-wins-0}
  (N - 1) \,m_+ > (N - 2) \,M_+ + M_- \quad \hbox{and} \quad (N - 1) \,M_- < (N - 2) \,m_- + m_+.
\end{equation}
\end{lemma}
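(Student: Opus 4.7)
The plan is to verify the implication~\eqref{eq:voter-2} for some $\ep > 0$ and then invoke Theorem~III.1.5 of~\cite{liggett_1985} together with the duality already mentioned for $\zeta_t^2$. Writing $N_i := N_i(x, \xi) = N_i(x, \zeta^2)$, the rates of $\xi$ and $\zeta^2$ match whenever $N_1 N_2 = 0$, so the task reduces to bounding $p_{1 \to 2}(x, \xi)$ from below and $p_{2 \to 1}(x, \xi)$ from above in the mixed case $N_1 N_2 \neq 0$.

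The key combinatorial input is a connectivity property of the auxiliary graph $\mathcal{G}_x$ with vertex set $N_x$ and edges $\{y, y'\}$ whenever $y' \in N_y$. First I would check that $\mathcal{G}_x$ is connected precisely when $(M, d) \neq (1, 1)$: in one dimension with $M \geq 2$, the path $x - M, \ldots, x - 1, x + 1, \ldots, x + M$ does the job since its only non-unit step has length $2 \leq M$, while in dimension $d \geq 2$ unit $\ell^\infty$ steps routed around $x$ connect any two vertices of $N_x$. Only the excluded case leaves $\{x-1, x+1\}$ edgeless. Connectivity together with the presence of both strategies on $N_x$ then produces a \emph{mixed edge}: vertices $y_1, y_2 \in N_x$ with $\xi(y_1) = 1$, $\xi(y_2) = 2$ and $y_1 \in N_{y_2}$.

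Writing $\phi(y, \xi) = N\phi_{\xi(y)}(z_y)$ with $z_y$ the number of same-type neighbors of $y$, the mixed edge forces $z_{y_1}, z_{y_2} \leq N - 1$, so $\phi(y_1, \xi) \leq N M_-$ and $\phi(y_2, \xi) \geq N m_+$, while every other type-1 (resp.\ type-2) neighbor of $x$ only satisfies the trivial bound $\phi \leq N M_+$ (resp.\ $\phi \geq N m_-$). When $\xi(x) = 1$, the inclusion $x \in N_y$ upgrades the lower bound on \emph{every} type-2 neighbor of $x$ to $\phi \geq N m_+$ uniformly, and inserting the resulting estimates into the ratio $p_{1 \to 2} = a/(a+b)$ reduces the target inequality $p_{1 \to 2}(x, \xi) \geq N_2/N$ to $N_1 m_+ \geq (N_1 - 1) M_+ + M_-$, whose worst case $N_1 = N - 1$ is precisely the first inequality in~\eqref{eq:2-wins-0}. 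When $\xi(x) = 2$, the symmetric argument upgrades every type-1 payoff to $\phi \leq N M_-$ uniformly, and the analogous computation reduces to the second inequality in~\eqref{eq:2-wins-0}.

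Since both inequalities in~\eqref{eq:2-wins-0} are strict, a small $\ep > 0$ can be absorbed, closing the verification of~\eqref{eq:voter-2}. I expect the main obstacle to be the geometric connectivity claim for $\mathcal{G}_x$: it is intuitively clear but still needs a short case analysis, and its failure when $(M, d) = (1, 1)$ is exactly what permits a type-1 neighbor of $x$ to be surrounded entirely by type-1 vertices even in the presence of a type-2 neighbor of $x$, thereby breaking the uniform upper bound on type-1 payoffs and explaining why this case must be excluded from the hypotheses.
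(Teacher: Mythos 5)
Your proposal is correct and follows essentially the same route as the paper's proof: coupling with the modified voter model $\zeta^2$ and verifying~\eqref{eq:voter-2}, producing a mixed edge inside $N_x$ from connectivity of the punctured neighborhood (which fails exactly when $(M,d)=(1,1)$), bounding the two rates with $M_+, M_-, m_-, m_+$, and absorbing a small $\ep>0$ via the strictness of~\eqref{eq:2-wins-0} and the finitely many values of $(N_1,N_2)$. The one imprecision is your claim that $N_1=N-1$ is always the binding case for $N_1\,m_+ \geq (N_1-1)\,M_+ + M_-$: that is true only when $m_+\leq M_+$, but since the constraint is linear in $N_1$ it suffices to also check $N_1=1$ (and $N_2=1$ for the reverse transition), where the required $m_+>M_-$ follows from~\eqref{eq:2-wins-0} together with $M_-\leq M_+$ and $m_-\leq m_+$ --- this is precisely the two-case split the paper carries out when comparing $g_i$ with $h_i$.
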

\begin{proof}
 Following as in the proof of Lemma~\ref{lem:1-wins}, it suffices to show that~\eqref{eq:voter-2} holds.
 This is again trivial when the fraction of type~1 neighbors of vertex~$x$ is equal to either zero or one so we focus from now on on the nontrivial case where
\begin{equation}
\label{eq:2-wins-1}
  N_1 N_2 \neq 0 \quad \hbox{where} \quad N_j := N_j (x, \xi) = N_j (x, \zeta^2) \quad \hbox{for} \quad j = 1, 2,
\end{equation}
 indicating that~$x$ has two neighbors~$y^*$ and~$z^*$ with different strategies.
 Except in the one-dimensional nearest neighbor case~$M = d = 1$, one can find two vertices~$y_*$ and~$z_*$ such that
 $$ y_*, z_* \neq x \quad \hbox{and} \quad y_* \in N_x \cap N_{y^*} \quad \hbox{and} \quad z_* \in N_x \cap N_{z^*} \quad \hbox{and} \quad y_* \in N_{z_*} $$
 and we may assume that~$y^*$ and~$z^*$ are neighbors of each other:
 $$ \xi (y^*) = 1 \quad \hbox{and} \quad \xi (z^*) = 2 \quad \hbox{and} \quad y^*, z^* \in N_x \quad \hbox{and} \quad y^* \in N_{z^*}. $$
 The rest of the proof is divided into two steps depending on the transition. \vspace*{5pt} \\
\noindent{\bf Transition~$1 \to 2$} --
 Assume that~$\xi (x) = \zeta^2 (x) = 1$. Then,
 $$ \phi (y \,| \,\xi (y) = 1) \leq M_+ \quad \hbox{and} \quad \phi (y \,| \,\xi (y) = 2) \geq m_+ \quad \hbox{for all} \quad y \in N_x. $$
 In addition, we have~$\phi (y^*, \xi) \leq M_-$ therefore
\begin{equation}
\label{eq:2-wins-2}
  \begin{array}{rcl}
    p_{1 \to 2} (x, \xi) & \n \geq \n & \displaystyle \frac{m_+ \,N_2}{(N_1 - 1) \,M_+ + M_- + m_+ \,N_2} \vspace*{8pt} \\
                         & \n   =  \n & \displaystyle \frac{m_+ \,N_2}{(m_+ - M_+) \,N_2 + (N - 1) \,M_+ + M_-}. \end{array}
\end{equation}
 Now, for all~$z \in [0, N]$, we define the functions
 $$ g_1 (z) := \frac{m_+ \,z}{(m_+ - M_+) \,z + (N - 1) \,M_+ + M_-} \quad \hbox{and} \quad h_1 (z) := \frac{(1 - \ep) \,z}{N} + \ep. $$
 Since~$(N - 1) \,m_+ > (N - 2) \,M_+ + M_-$,
\begin{equation}
\label{eq:2-wins-3}
  g_1 (1) = \frac{m_+}{m_+ + (N - 2) \,M_+ + M_-} > \frac{1}{N}.
\end{equation}
 We then distinguish two cases. \vspace*{5pt} \\
\noindent{\bf Case 1} -- When~$M_+ > m_+$, it follows from~\eqref{eq:2-wins-3} that
 $$ g_1 (z) \geq g_1 (1) \,z > z/N \quad \hbox{for all} \quad z \in [1, N - 1] $$
 therefore~$g_1 (z) \geq h_1 (z)$ for all~$\ep > 0$ small by continuity. \vspace*{5pt} \\
\noindent{\bf Case 2} -- When~$M_+ \leq m_+$, the function~$g_1$ is concave down. Moreover,
 $$ g_1 (1) \geq \frac{1}{N} + \bigg(1 - \frac{1}{N} \bigg) \,\ep = h_1 (1) $$
 for all~$\ep > 0$ small according to~\eqref{eq:2-wins-3}, while
 $$ g_1 (N) = \frac{N m_+}{N m_+ - M_+ + M_-} \geq 1 = h_1 (N). $$
 This again implies that~$g_1$ dominates~$h_1$ for all~$\ep > 0$ small (see Figure~\ref{fig:concave} for a picture). \vspace*{5pt} \\
 Recalling~\eqref{eq:2-wins-2}, we deduce that, in both cases and when~\eqref{eq:2-wins-1} holds,
 $$ p_{1 \to 2} (x, \xi) \geq g_1 (N_2) \geq h_1 (N_2) = c_{1 \to 2} (x, \zeta^2) $$
 which proves the first inequality in~\eqref{eq:voter-2}. \vspace*{5pt} \\
\noindent{\bf Transition~$2 \to 1$} --
 Assume that~$\xi (x) = \zeta^2 (x) = 2$. Then,
 $$ \phi (y \,| \,\xi (y) = 1) \leq M_- \quad \hbox{and} \quad \phi (y \,| \,\xi (y) = 2) \geq m_- \quad \hbox{for all} \quad y \in N_x. $$
 In addition, we have~$\phi (z^*, \xi) \geq m_+$ therefore
\begin{equation}
\label{eq:2-wins-4}
  \begin{array}{rcl}
    p_{2 \to 1} (x, \xi) & \n \leq \n & \displaystyle \frac{M_- \,N_1}{M_- \,N_1 + (N_2 - 1) \,m_- + m_+} \vspace*{8pt} \\
                         & \n   =  \n & \displaystyle \frac{M_- \,N_1}{(M_- - m_-) \,N_1 + (N - 1) \,m_- + m_+}. \end{array}
\end{equation}
 Now, for all~$z \in [0, N]$, we define the functions
 $$ g_2 (z) := \frac{M_- \,z}{(M_- - m_-) \,z + (N - 1) \,m_- + m_+} \quad \hbox{and} \quad h_2 (z) := \frac{(1 - \ep) \,z}{N}. $$
 Since~$(N - 1) \,M_- < (N - 2) \,m_- + m_+ \leq (N - 1) \,m_+$,
\begin{equation}
\label{eq:2-wins-5}
  \begin{array}{rcl}
        g_2 (1) & \n = \n & \displaystyle \frac{M_-}{M_- + (N - 2) \,m_- + m_+} < \frac{1}{N} \vspace*{8pt} \\
    g_2 (N - 1) & \n = \n & \displaystyle \frac{(N - 1) \,M_-}{(N - 1) \,M_- + m_+} < 1 - \frac{1}{N}. \end{array}
\end{equation}
 As before, we distinguish two cases. \vspace*{5pt} \\
\noindent{\bf Case 1} -- When~$M_- > m_-$, it follows from~\eqref{eq:2-wins-5} that
 $$ g_2 (z) \leq g_2 (1) \,z < z/N \quad \hbox{for all} \quad z \in [1, N - 1] $$
 therefore~$g_2 (z) \leq h_2 (z)$ for all~$\ep > 0$ small by continuity. \vspace*{5pt} \\
\noindent{\bf Case 2} -- When~$M_- \leq m_-$, the function~$g_2$ is concave up. Moreover,
 $$ \begin{array}{rcl}
      g_2 (N - 1) & \n \geq \n & (1 - 1/N)(1 - \ep) = h_2 (N - 1) \ \ \hbox{for} \ \ \ep > 0 \ \hbox{small} \vspace*{4pt} \\
          g_2 (0) & \n   =  \n & h_2 (0) = 0 \end{array} $$
 where the first inequality follows from~\eqref{eq:2-wins-5}.
 This again implies that~$h_2$ dominates~$g_2$ for all~$\ep > 0$ small, and we refer to the right-hand side of Figure~\ref{fig:concave} for a picture. \vspace*{5pt} \\
 Recalling~\eqref{eq:2-wins-4}, we deduce that, in both cases and when~\eqref{eq:2-wins-1} holds,
 $$ p_{2 \to 1} (x, \xi) \leq g_2 (N_1) \leq h_2 (N_1) = c_{2 \to 1} (x, \zeta^2) $$
 which proves the second inequality in~\eqref{eq:voter-2}. \vspace*{5pt} \\
 This completes the proof.
\begin{figure}[t]
\centering
\scalebox{0.50}{\input{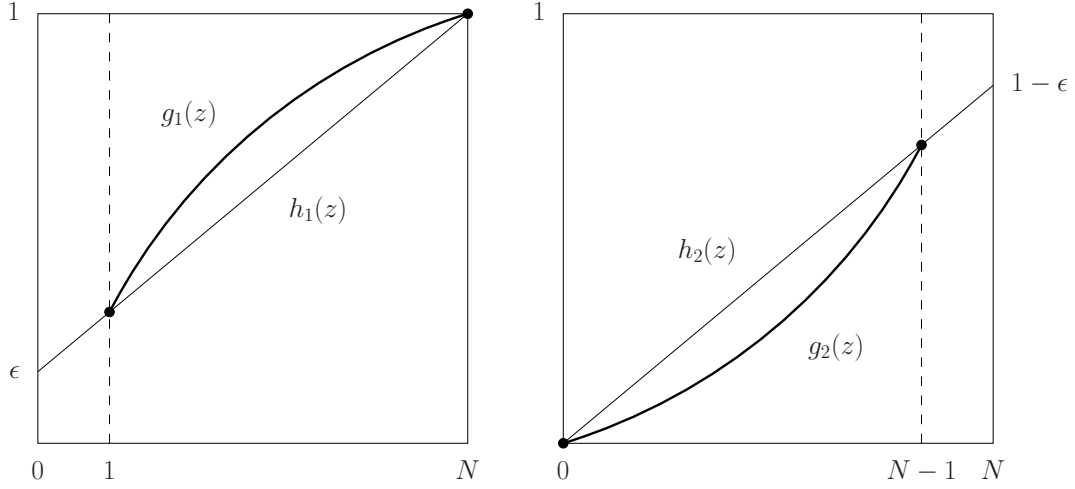}}
\caption{\upshape{Picture related to the proof of Lemma~\ref{lem:2-wins}.}}
\label{fig:concave}
\end{figure}
\end{proof} \\ \\
 To complete the proof of Theorem~\ref{th:coupling}, the last step is to re-express the condition in the previous lemma using the payoff coefficients.
\begin{lemma} --
\label{lem:region}
 Assume that~$a_{12} < a_{21}$.
 Then~\eqref{eq:2-wins-0} holds whenever
\begin{equation}
\label{eq:region-0}
  (N^2 - N - 1) \,\max \,(a_{11} - a_{21}, a_{12} - a_{22}, a_{11} - a_{22}) < a_{21} - a_{12}.
\end{equation}
\end{lemma}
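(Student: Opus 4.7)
The plan is to perform a case analysis on the signs of $a_{11}-a_{12}$ and $a_{21}-a_{22}$, since these determine the extrema $M_\pm$ and $m_\pm$. As $\phi_1$ and $\phi_2$ are linear in $z$ on $\{0,1,\ldots,N\}$, the maxima and minima are attained at endpoints: when $a_{11} \geq a_{12}$ the function $\phi_1$ is nondecreasing, so $M_+ = a_{11}$ and $M_- = a_{11} - (a_{11}-a_{12})/N$, whereas when $a_{11} < a_{12}$ one simply has $M_+ = M_- = a_{12}$; and symmetrically for $m_\pm$ in terms of $a_{21}, a_{22}$.

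Introduce the shorthands $\alpha := a_{11}-a_{21}$, $\beta := a_{12}-a_{22}$, $\gamma := a_{11}-a_{22}$, and $\delta := a_{21}-a_{12} > 0$, so that $\gamma = \alpha + \beta + \delta$ and the hypothesis \eqref{eq:region-0} reads $(N^2-N-1)\max(\alpha,\beta,\gamma) < \delta$. Two of the four cases are immediate. In the case $a_{11} < a_{12}$ and $a_{22} > a_{21}$, both inequalities in \eqref{eq:2-wins-0} collapse to $a_{12} < a_{21}$. In the case $a_{11} < a_{12}$ and $a_{22} \leq a_{21}$, substitution and clearing of denominators yields the conditions $(N-1)\beta < \delta$ and $(N^2-N-1)\beta < \delta$, and symmetrically the case $a_{11} \geq a_{12}$, $a_{22} > a_{21}$ reduces to $(N^2-N-1)\alpha < \delta$ and $(N-1)\alpha < \delta$; the stronger inequality of each pair is part of \eqref{eq:region-0}, and the weaker follows because $N-1 \leq N^2-N-1$ whenever $N \geq 2$.

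The remaining case, $a_{11} \geq a_{12}$ and $a_{22} \leq a_{21}$, is the heart of the argument. Substituting the explicit formulas for $M_\pm$ and $m_\pm$ into \eqref{eq:2-wins-0}, clearing denominators, and using $\gamma = \alpha + \beta + \delta$ to eliminate $\alpha$, the first inequality becomes $(N^2-N-1)\gamma - (N-2)\beta < (N-1)\delta$ and the second becomes $(N-1)^2\gamma + (N-2)\beta < \delta$. For the first, I would observe that the assumption $a_{22} \leq a_{21}$ forces $\beta \geq -\delta$, so $(N-1)\delta + (N-2)\beta \geq \delta$, while $(N^2-N-1)\gamma \leq (N^2-N-1)\max(\alpha,\beta,\gamma) < \delta$ by \eqref{eq:region-0}, closing the estimate. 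For the second, the decisive identity is $(N-1)^2 + (N-2) = N^2-N-1$: when both $\gamma$ and $\beta$ are positive one bounds $(N-1)^2\gamma + (N-2)\beta \leq [(N-1)^2 + (N-2)]\max(\alpha,\beta,\gamma) = (N^2-N-1)\max(\alpha,\beta,\gamma) < \delta$, and when one of $\gamma$ or $\beta$ is nonpositive its contribution is dropped and the remaining term is controlled by $(N-1)^2 \leq N^2-N-1$ or $N-2 \leq N^2-N-1$, both valid for $N \geq 2$.

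The main obstacle is purely bookkeeping: keeping careful track of signs across all four cases and their subcases, and recognizing that the constant $N^2-N-1$ in \eqref{eq:region-0} is calibrated precisely so that the identity $(N-1)^2+(N-2)=N^2-N-1$ closes the hard case, with no slack to spare in the sub-subcase where $\beta$ and $\gamma$ are both positive. No conceptual idea is required beyond this deterministic case split.
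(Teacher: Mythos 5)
Your proof is correct, and its skeleton is the same as the paper's: split into four cases according to the signs of $a_{11}-a_{12}$ and $a_{22}-a_{21}$, plug the endpoint values of the linear payoff functions into the two inequalities of~\eqref{eq:2-wins-0}, and clear denominators. Your reductions check out: the three ``easy'' cases give exactly the conditions $a_{12}<a_{21}$, $(N^2-N-1)\alpha<\delta$ (with the redundant $(N-1)\alpha<\delta$), and $(N^2-N-1)\beta<\delta$, matching the paper's Cases 1--3; and in the mixed case your two reduced inequalities, $(N^2-N-1)\gamma-(N-2)\beta<(N-1)\delta$ and $(N-1)^2\gamma+(N-2)\beta<\delta$, are equivalent to the paper's~\eqref{eq:region-1} after the substitution $\alpha=\gamma-\beta-\delta$. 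Where you diverge is in how the mixed case is closed: the paper argues geometrically in the $a_{11}$--$a_{22}$ plane, describing the two inequalities as triangles with common vertical and horizontal sides and slanted sides through the points $p_\pm$ with slopes $s_\pm=s_\mp^{-1}$, and showing the triangle $(p,p_-,p_+)$, i.e.\ the region $(N^2-N-1)(a_{11}-a_{22})<a_{21}-a_{12}$, sits inside their intersection. You instead give a purely algebraic verification keyed to the identity $(N-1)^2+(N-2)=N^2-N-1$, together with $\beta\ge-\delta$ (from $a_{22}\le a_{21}$) for the first inequality and a sign split on $\beta,\gamma$ for the second. The two arguments exploit the same structural fact --- in this case $\gamma$ dominates $\alpha$ and $\beta$, so only the $\gamma$-condition is really binding --- but yours is self-contained and line-by-line checkable without a figure, while the paper's picture makes the sharpness of the constant $N^2-N-1$ (the slanted boundary of the admissible triangle) visually transparent. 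Both are valid; there is no gap in your argument.
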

\begin{proof}
 We distinguish four cases depending on the sign of~$a_{11} - a_{12}$ and~$a_{22} - a_{21}$. \vspace*{5pt} \\
\noindent{\bf Case 1} -- When~$a_{11} < a_{12}$ and~$a_{22} > a_{21}$, we have
 $$ M_+ = M_- = a_{12} \quad \hbox{and} \quad m_- = m_+ = a_{21} $$
 therefore~\eqref{eq:2-wins-0} holds if and only if~$a_{12} < a_{21}$, which is true by assumption. \vspace*{5pt} \\
\noindent{\bf Case 2} -- When~$a_{11} > a_{12}$ and~$a_{22} > a_{21}$, we have
 $$ M_+ = a_{11}, \quad M_- = a_{11} + (1/N)(a_{12} - a_{11}), \quad m_- = m_+ = a_{21}. $$
 Using some basic algebra, we deduce that~\eqref{eq:2-wins-0} holds if and only if
 $$ (N^2 - N - 1)(a_{11} - a_{21}) < a_{21} - a_{12} \quad \hbox{and} \quad (N - 1)(a_{11} - a_{21}) < a_{21} - a_{12} $$
 therefore~\eqref{eq:2-wins-0} holds if and only if~$(N^2 - N - 1)(a_{11} - a_{21}) < a_{21} - a_{12}$. \vspace*{5pt} \\
\noindent{\bf Case 3} -- Assume that~$a_{11} < a_{12}$ and~$a_{22} < a_{21}$.
 This case can be deduced from the previous one by symmetry exchanging the role of the two strategies, and we find that
 $$ \eqref{eq:2-wins-0} \ \hbox{holds} \quad \hbox{if and only if} \quad (N^2 - N - 1)(a_{12} - a_{22}) < a_{21} - a_{12}. $$
\noindent{\bf Case 4} -- When~$a_{11} > a_{12}$ and~$a_{22} < a_{21}$, it is easier to prove the result graphically and we refer to the phase diagram of Figure~\ref{fig:diagram-2D} for an illustration of some of the arguments of the proof.
 In this case, the minimum and maximum payoffs are given by
 $$ \begin{array}{rcl}
      M_+ = a_{11} & \hbox{and} & M_- = a_{11} + (1/N)(a_{12} - a_{11})  \vspace*{4pt} \\
      m_- = a_{22} & \hbox{and} & m_+ = a_{22} + (1/N)(a_{21} - a_{22}) \end{array} $$
 so the two inequalities in~\eqref{eq:2-wins-0} are respectively equivalent to
\begin{equation}
\label{eq:region-1}
 \begin{array}{rcl}
   (N - 1)^2 \,a_{22} & \n > \n & (N^2 - N - 1) \,a_{11} + a_{12} - (N - 1) \,a_{21}  \vspace*{4pt} \\
   (N - 1)^2 \,a_{11} & \n < \n & (N^2 - N - 1) \,a_{22} + a_{21} - (N - 1) \,a_{12}. \end{array}
\end{equation}
 Since in addition~$a_{11} > a_{12}$ and~$a_{22} < a_{21}$, this specifies two triangles with two common sides, one vertical side and one horizontal side that intersect at point
 $$ p := (a_{12}, a_{21}) \quad \hbox{in the~$a_{11} - a_{22}$ plane}. $$
 For the first inequality in~\eqref{eq:region-1}, the third side of the triangle is the segment line going through point~$p_+$ and with slope~$s_+$ where
\begin{equation}
\label{eq:region-2}
  \begin{array}{rcl}
    p_+ & \n := \n & (a_{21} + (N^2 - N - 1)^{-1} \,(a_{21} - a_{12}), a_{21}) \vspace*{4pt} \\
    s_+ & \n := \n & (N^2 - N - 1)(N - 1)^{-2} = 1 + (N - 2)(N - 1)^{-2} > 1. \end{array}
\end{equation}
 Using some obvious symmetry, one finds that the third side of the triangle specified by the second inequality in~\eqref{eq:region-1} is characterized by the point and slope
\begin{equation}
\label{eq:region-3}
  p_- := (a_{12}, a_{12} - (N^2 - N - 1)^{-1} \,(a_{21} - a_{12})) \quad \hbox{and} \quad s_- = 1/s_+ < 1.
\end{equation}
 Since the segment line connecting~$p_-$ and~$p_+$ has slope one, the triangle~$(p, p_-, p_+)$ is contained in the intersection of the two triangles specified by~\eqref{eq:region-2}--\eqref{eq:region-3}.
 In particular, whenever the payoff coefficients are in this triangle, which is equivalent to
 $$ a_{11} > a_{12} \quad \hbox{and} \quad a_{22} < a_{21} \quad \hbox{and} \quad (N^2 - N - 1)(a_{11} - a_{22}) < a_{21} - a_{12}, $$
 the two inequalities in~\eqref{eq:region-1} hold. \vspace*{5pt} \\
 Since all four cases hold simultaneously when~\eqref{eq:region-0} holds, the proof is complete.
\end{proof} \\ \\
 Theorem~\ref{th:coupling}.b directly follows from Lemmas~\ref{lem:2-wins} and~\ref{lem:region}.


\section{Coupling with a pure growth process}
\label{sec:growth}

\indent This section is devoted to the proof of Theorem~\ref{th:growth} which states that, the other payoffs being fixed, strategy~1 wins whenever the payoff coefficient~$a_{11}$ is sufficiently large.
 The intuition behind this result, which is also the first step of the proof, is to observe that, in the limit as~$a_{11}$ goes to infinity, type~1 players with at least one neighbor of their own type never change their strategy.
 In particular, the set of type~1 players dominates a Richardson model~\cite{richardson_1973}, i.e., a contact process with no death, which obviously implies that strategy~1 wins.
 Using a block construction and the fact that the transition rates are continuous functions of the payoffs, we deduce that the process reaches an equilibrium with a density of type~1 close to one when~$a_{11}$ is finite but large.
 The rest of the proof consists in showing that we can indeed convert the remaining type~2 players, which directly follows from percolation results already established in~\cite{durrett_1992, lanchier_2013}.

\indent To turn our sketch into a rigorous proof, we let~$\zeta_t$ be the~$d$-dimensional Richardson model with parameter~$\mu$.
 In this spin system, each vertex of the~$d$-dimensional integer lattice is either empty or occupied by a particle.
 Each particle produces a new particle which is then sent to a neighbor chosen uniformly at random at rate~$\mu$.
 This results in either an empty site becoming empty or two particles coalescing in case the target site is already occupied.
 In addition, occupied sites remain occupied forever.
 More formally, using the usual notation~0 for empty and 1~for occupied, the transition rates of the process at vertex~$x$ are given by
 $$ c_{0 \to 1} (x, \zeta) = \mu \,f_1 (x, \zeta) \quad \hbox{and} \quad c_{1 \to 0} (x, \zeta) = 0. $$
 In order to compare the process properly rescaled in space and time with oriented site percolation, we also introduce the space-time regions
 $$ B_K := [-K, K]^d \quad \hbox{and} \quad B_K (z) := Kz + B_K \quad \hbox{for all} \quad z \in \Z^d. $$
 Then, we have the following lemma, where the processes under consideration have been identified to the set of vertices in state~1 to lighten the expressions.
\begin{lemma} --
\label{lem:richardson}
 For all~$a, \ep > 0$ there exist~$A, K, c < \infty$ such that
 $$ P \,(B_{2K} \not \subset \xi_t \ \hbox{for some} \ t \in (cK, 2 cK) \,| \,B_K \subset \xi_0) \leq \ep $$
 whenever~$\max \,(a_{21}, a_{22}) \leq a$ and~$a_{11} > A$.
\end{lemma}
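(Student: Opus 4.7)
The plan is to couple the set of type~1 players in $\xi_t$ with the Richardson model $\zeta_t$ of some suitable parameter $\mu > 0$ so that $\zeta_t \subset \{x : \xi_t(x) = 1\}$, and then invoke the shape theorem for $\zeta_t$ to obtain linear spread of type~1 players. Two rate bounds in the regime of large $a_{11}$ drive the coupling. First, if $\xi(x) = 1$ and $x$ has at least one type~1 neighbor $y_0 \in N_x$, then by symmetry of the neighborhood $x$ is itself a type~1 neighbor of $y_0$, so $N_1(y_0, \xi) \geq 1$ and $\phi(y_0, \xi) \geq a_{11}$; combined with the uniform bound $\sum_{y \in N_x} \phi(y, \xi)\, \ind\{\xi(y) = 2\} \leq a N^2$ (since each individual payoff is at most $aN$), this yields $p_{1 \to 2}(x, \xi) \leq a N^2 / a_{11}$. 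Second, if $\xi(x) = 2$ and some type~1 neighbor $y_0 \in N_x$ itself has a type~1 neighbor, then $\phi(y_0, \xi) \geq a_{11}$ forces $p_{2 \to 1}(x, \xi) \geq a_{11}/(a_{11} + a N^2)$, which exceeds any prescribed threshold once $a_{11}$ is large.

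Fixing $\mu > 0$ and $A_0 < \infty$ so that for $a_{11} > A_0$ the inequality $p_{2 \to 1}(x, \xi) \geq \mu\, f_1(x, \xi)$ holds whenever some type~1 neighbor of $x$ has a type~1 neighbor, the standard graphical construction couples $\xi_t$ starting from any configuration with $B_K \subset \xi_0$ with a Richardson process $\zeta_t$ of parameter $\mu$ started from $\zeta_0 = B_K$. Let $\mathbf{B}$ denote the event that some $1 \to 2$ transition of $\xi_t$ occurs in the enlarged box $B_{2K+M}$ during $[0, 2cK]$. On $\mathbf{B}^c$ the type~1 set in $B_{2K+M}$ is non-decreasing, every newly created type~1 cell is produced adjacent to the cluster that contains $B_K$, and hence the stability condition needed for the lower bound on $p_{2 \to 1}$ is satisfied at the growing interface; the coupling therefore propagates $\zeta_t \subset \{x : \xi_t(x) = 1\}$ for all $t \in [0, 2cK]$. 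By the shape theorem for the $d$-dimensional Richardson process, there is a convex compact set $\bar D$ depending only on $\mu$ and $d$ with $\zeta_t/t \to \bar D$ almost surely starting from a single site; choosing $c$ large enough (independent of $K$) that $B_2 \subset (c/2)\, \bar D$, one concludes that for $K$ sufficiently large the event $\{B_{2K+M} \subset \zeta_t$ for all $t \in (cK, 2cK)\}$ has probability at least $1 - \ep/2$.

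A union bound over the at most $(2K + M + 1)^d$ vertices of $B_{2K+M}$ and the Poisson clock rings during $[0, 2cK]$, together with the rate bound $p_{1 \to 2} \leq a N^2 / a_{11}$, gives $P(\mathbf{B}) \leq (2K+M+1)^d \cdot (2cK) \cdot (aN^2/a_{11})$, which drops below $\ep/2$ as soon as $a_{11}$ exceeds a threshold $A$ chosen depending on $K, c,$ and $\ep$. On the intersection of the two good events one has $B_{2K} \subset B_{2K+M} \subset \zeta_t \subset \{x : \xi_t(x) = 1\}$ throughout $(cK, 2cK)$, proving the lemma. The main technical obstacle is verifying that forbidding $1 \to 2$ flips in the enlarged box indeed preserves the stability hypothesis on type~1 neighbors throughout the growth, so that the lower bound on $p_{2 \to 1}$ continues to apply at the interface; this hinges on the fact that the type~1 cluster grown from $B_K$ never becomes thin on $\mathbf{B}^c$, which in turn follows from monotonicity of the type~1 set in the forbidden-flip regime and the nontrivial size of the initial box $B_K$.
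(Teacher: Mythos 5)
Your overall strategy is the same as the paper's: dominate the type-1 set by a Richardson model using the two large-$a_{11}$ rate bounds, invoke the shape theorem to fix $c$ (and $K$), and only then push $a_{11}$ up to control the coupling error on a finite space-time window. The gap is in the step that is supposed to control that error. You define $\mathbf B$ as the event that \emph{some} $1 \to 2$ flip occurs anywhere in $B_{2K+M}$ during $[0,2cK]$ and bound $P(\mathbf B)$ by a union bound with per-site flip rate $aN^2/a_{11}$. That rate bound is valid only at type-1 sites having at least one type-1 neighbor. The lemma conditions only on $B_K \subset \xi_0$, so the annulus $B_{2K+M}\setminus B_K$ may contain type-1 players all of whose neighbors are of type 2, and such a player flips $1 \to 2$ at rate exactly one (its $p_{1\to 2}$ equals $1$) no matter how large $a_{11}$ is. Hence $P(\mathbf B)$ cannot be made small by increasing $a_{11}$, and the decomposition into $\mathbf B^c$ and the coverage event collapses as written. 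The flips you actually need to exclude are only those at sites of the coupled Richardson cluster: by the connectivity property~\eqref{eq:richardson-2} (every occupied site keeps an occupied neighbor when started from $B_K$), such sites do have a type-1 neighbor as long as the inclusion $\zeta_t \subset \{\xi_t = 1\}$ has held so far, so their flip rate is indeed at most $aN^2/a_{11}$, and a union bound over a finite space-time region then works; this is in effect what the paper does, phrasing the step as continuity of the rates in the payoffs over the finite region appearing in~\eqref{eq:richardson-3}.

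A second, related problem is that $B_{2K+M}$ is too small to localize the coupling. By time $2cK$ the Richardson process started from $B_K$ typically spreads well beyond $B_{2K+M}$ (that is the point of choosing $c$ so that it covers $B_{2K+M}$ already by time $cK$), and growth of $\zeta$ inside the box can be triggered by $\zeta$-sites outside the box, where you have not forbidden $1\to 2$ flips; if the inclusion fails out there, the lower bound $p_{2\to 1}(x,\xi)\geq a_{11}/(a_{11}+aN^2)$ at the inside target site is no longer guaranteed, so the claim that the coupling ``propagates'' on $\mathbf B^c$ is not justified. You should either forbid the harmful flips (at cluster sites) on a region large enough to contain the whole Richardson spread up to time $2cK$ with high probability, adding the escape event to the error, or state the domination only through a fixed box as in~\eqref{eq:richardson-3} and absorb the outside influence into the error term. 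A minor further slip: a newly created type-1 player is adjacent to \emph{some} type-1 player, not necessarily to the cluster containing $B_K$; this is harmless since only the interface of the coupled cluster matters. With the bad event restricted to cluster sites and the localization handled, your argument becomes a correct, somewhat more quantitative version of the paper's proof.
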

\begin{proof}
 To begin with, we note that
 $$ \begin{array}{rclcl}
      \phi (x, \xi) & \n \geq \n & a_{11} \,N_1 (x, \xi) \geq a_{11} & \hbox{when} & x \in \xi \ \hbox{and} \ f_1 (x, \xi) \neq 0 \vspace*{4pt} \\
      \phi (x, \xi) & \n \leq \n & (2M + 1)^d \,\max \,(a_{21}, a_{22})  & \hbox{when} & x \notin \xi \end{array} $$
 from which it follows that, as~$a_{11} \to \infty$,
\begin{equation}
\label{eq:richardson-1}
  \begin{array}{rclcl}
    p_{1 \to 2} (x, \xi) & \hspace*{-5pt} \to \hspace*{-5pt} & 0 & \hbox{when} & f_1 (x, \xi) \neq 0 \vspace*{4pt} \\
    p_{2 \to 1} (x, \xi) & \hspace*{-5pt} \to \hspace*{-5pt} & 1 & \hbox{when} & f_1 (x, \xi) \,f_2 (x, \xi) \neq 0 \ \hbox{for some} \ y \in N_x. \end{array}
\end{equation}
 Looking now at the Richardson model, we observe that, because there is no death and because each particle newly created must be in the neighborhood of its parent, the set of occupied sites satisfies the following connectivity property:
\begin{equation}
\label{eq:richardson-2}
  x \in \zeta_t \quad \hbox{implies that} \quad f_1 (x, \xi_t) \neq 0 \quad \hbox{for all times~$t$}
\end{equation}
 provided this holds at time zero.
 Combining~\eqref{eq:richardson-1}--\eqref{eq:richardson-2} and using that the set~$B_K$ is connected, we deduce that the death-birth process can be coupled with the Richardson model with parameter one in such a way that, for all fixed~$K, c > 0$,
\begin{equation}
\label{eq:richardson-3}
  \begin{array}{l} P \,(B_{2K} \cap \zeta_t \not \subset B_{2K} \cap \xi_t \ \hbox{for some} \ t \in (cK, 2cK) \,| \ \zeta_0 = \xi_0 = B_K) \to 0 \end{array}
\end{equation}
 as~$a_{11} \to \infty$.
 In addition, it directly follows from the shape theorem~\cite{richardson_1973} for the Richardson model that there exists a positive constant~$c > 0$ such that
\begin{equation}
\label{eq:richardson-4}
  \begin{array}{l}
    P \,(B_{2K} \not \subset \zeta_t \ \hbox{for some} \ t \in (cK, 2cK) \,| \ B_K \subset \zeta_0) \vspace*{4pt} \\ \hspace*{25pt} = \
    P \,(B_{2K} \not \subset \zeta_{cK} \,| \ B_K \subset \zeta_0) \leq P \,(B_{2K} \not \subset \zeta_{cK} \,| \ \zeta_0 = \{0 \}) \leq \ep / 2 \end{array}
\end{equation}
 for all~$K$ large.
 Now, fix~$K, c > 0$ such that~\eqref{eq:richardson-4} holds.
 Since the transition rates of the death-birth updating process are continuous with respect to the payoff coefficients and since the space-time region in the event in~\eqref{eq:richardson-3} is finite, there is~$A < \infty$ such that
\begin{equation}
\label{eq:richardson-5}
  \begin{array}{l} P \,(B_{2K} \cap \zeta_t \not \subset B_{2K} \cap \xi_t \ \hbox{for some} \ t \in (cK, 2cK) \,| \ \zeta_0 = \xi_0 = B_K) \leq \ep / 2 \end{array}
\end{equation}
 for all~$a_{11} > A$.
 Combining~\eqref{eq:richardson-4}--\eqref{eq:richardson-5}, we conclude that
 $$ \begin{array}{l}
      P \,(B_{2K} \not \subset \xi_t \ \hbox{for some} \ t \in (cK, 2 cK) \,| \,B_K \subset \xi_0) \vspace*{4pt} \\ \hspace*{25pt} \leq \
      P \,(B_{2K} \not \subset \zeta_t \ \hbox{for some} \ t \in (cK, 2cK) \,| \ B_K \subset \zeta_0) \vspace*{4pt} \\ \hspace*{50pt} + \
      P \,(B_{2K} \cap \zeta_t \not \subset B_{2K} \cap \xi_t \ \hbox{for some} \ t \in (cK, 2cK) \,| \ \zeta_0 = \xi_0 = B_K) \vspace*{4pt} \\ \hspace*{25pt} \leq \
        \ep / 2 + \ep / 2 = \ep \end{array} $$
 for all~$a_{11} > A$. This completes the proof.
\end{proof} \\ \\
 From the lemma, we deduce that, starting from a product measure with a positive density of type~1 players, the density of type~1 at equilibrium is close to one when~$a_{11}$ is large.
 To prove this, we consider as previously the directed graph~$\H$ with vertex set
 $$ H := \{(z, n) \in \Z^d \times \Z_+ : z_1 + z_2 + \cdots + z_d + n \ \hbox{is even} \} $$
 and in which there is an edge~$(z, n) \to (z', n')$ if and only if
 $$ z' = z \pm e_j \ \hbox{for some} \ j = 1, 2, \ldots, d \quad \hbox{and} \quad n' = n + 1. $$
 Then, calling~$(z, n) \in H$ an occupied site when
 $$ B_K (z) \subset \xi_t \quad \hbox{for all} \quad t \in cnK + (0, cK) $$
 it follows from~Lemma~\ref{lem:richardson} that, for all~$\ep > 0$, one can choose~$a_{11}$ large enough so that the set of occupied sites dominates stochastically the set of wet sites in the percolation process
 where sites are closed with probability~$\ep$.
 Since the probability~$\ep$ can be made arbitrarily small, the density of type~1 players at equilibrium can be made arbitrarily close to one.

\indent The last step is to turn the remaining type~2 players into type~1 players.
 To do this, the basic idea is to rely on the lack of percolation of the dry (not wet) sites for a certain oriented site percolation process where sites are closed with a sufficiently small probability~$\ep$.
 The fact that the set of dry sites does not percolate for small positive~$\ep$ is proved in~\cite[section~3]{durrett_1992} for the percolation process described above.
 This result, however, is not sufficient to conclude because the lack of percolation of the dry sites for this percolation process does not imply extinction of strategy~2.
 To solve the problem, we consider oriented site percolation on a directed graph~$\H_+$ that has the same vertex set as before but additional arrows, namely
 $$ \begin{array}{l}
      (z, n) \to (z', n') \quad \hbox{if and only if} \vspace*{4pt} \\ \hspace*{30pt}
      z' = z \pm e_j \ \hbox{for some} \ j = 1, 2, \ldots, d \quad \hbox{and} \quad n' = n + 1 \vspace*{4pt} \\ \hspace*{40pt} \hbox{or} \quad
      z' = z \pm 2 e_j \ \hbox{for some} \ j = 1, 2, \ldots, d \quad \hbox{and} \quad n' = n. \end{array} $$
 The process on~$\H_+$ has the following two key properties:
\begin{enumerate}
 \item As for the process on~$\H$, the dry sites do not percolate if sites are closed with a small enough probability~$\ep > 0$.
  This is proved in~\cite[section~3]{lanchier_2013} following the ideas in~\cite{durrett_1992}. \vspace*{4pt}
 \item Recalling that the death-birth process and the percolation process on~$\H$ are coupled in such a way that the set of occupied sites dominates the set of wet sites, if
  $$ \xi_t (x) = 2 \quad \hbox{for some} \quad (x, t) \in B_K (z) \times (cnK, c (n + 1) K) $$
  then site~$(z, n)$ can be reached by a directed path of dry sites embedded in~$\H_+$.
  This second property is also established in~\cite[section~3]{lanchier_2013}.
  Even though the proof applies to another model, it easily extends to the death-birth process because it only relies on the fact that a type~2 player can only appear in the neighborhood of a type~2 player.
\end{enumerate}
 To deduce extinction of strategy~2, we first fix~$\ep > 0$ small such that the set of dry sites does not percolate for the percolation process on the directed graph~$\H_+$.
 Then, we take~$a_{11}$ large enough so that the set of occupied sites dominates the set of wet sites in the percolation process on the smaller directed graph~$\H$.
 Finally, it follows from the second property above that, because the dry sites do not percolate, the type~2 players do not survive.


\section{The prisoner's dilemma in one dimension}
\label{sec:dilemma}

\indent This section is devoted to the proof of Theorem~\ref{th:dilemma} which focuses on the one-dimensional death-birth process with nearest neighbor interactions.
 First, we explain how~$D_3$ and~$D_4$ in the statement of the theorem are obtained.
 To do so, we let
 $$ \begin{array}{rcl}
      p_i (n_1, n_2) & := & \hbox{the rate at which a type~$i$ player with} \vspace*{0pt} \\ &&
                            \hbox{one type~1 neighbor that has~$n_1$ type~1 neighbors and} \vspace*{0pt} \\ &&
                            \hbox{one type~2 neighbor that has~$n_2$ type~2 neighbors} \vspace*{0pt} \\ &&
                            \hbox{update her strategy} \end{array} $$
 for~$i = 1, 2$ and~$n_1, n_2 = 0, 1, 2$, and note that
 $$ \begin{array}{rcl}
      p_1 (n_1, n_2) & \n = \n & \displaystyle \frac{(2 - n_2) \,a_{21} + n_2 \,a_{22}}{n_1 \,a_{11} + (2 - n_1) \,a_{12} + (2 - n_2) \,a_{21} + n_2 \,a_{22}}  \vspace*{10pt} \\
      p_2 (n_1, n_2) & \n = \n & \displaystyle \frac{n_1 \,a_{11} + (2 - n_1) \,a_{12}}{n_1 \,a_{11} + (2 - n_1) \,a_{12} + (2 - n_2) \,a_{21} + n_2 \,a_{22}}. \end{array} $$
 Now, for the process starting with only~2s to the left of the origin, we let
 $$ X_t := \inf \,\{x \in \Z : \xi_t (x) = 2 \} - 1 \quad \hbox{and} \quad K_t := \inf \,\{x > 0  : \xi_t (x + X_t) = 1 \} $$
 be the position of the rightmost type~1 player with only type~1 players to her left and the distance between this type~1 player and the closest type~1 player to her right.
 Then, letting
 $$ \begin{array}{l} D_j (\xi_t) := \lim_{h \to \,0} \,h^{-1} \,E \,(X_{t + h} - X_t \,| \,\xi_t \ \hbox{and} \ K_t = j) \end{array} $$
 we have the following almost sure estimates
 $$ \begin{array}{ccccccccclll}
           & \hbox{\tiny 1} & \hbox{\tiny 1} & \hbox{\tiny 1} & \hbox{\tiny 2} & \hbox{\tiny 1} \vspace{-6pt} \\
    \cdots & \bullet & \bullet & \bullet & \circ & \bullet & \times  & \times & & D_2 (\xi_t) = D_2 = 2 - p_1 (2, 0) \vspace{-4pt} \\
           & \hbox{\tiny 1} & \hbox{\tiny 1} & \hbox{\tiny 1} & \hbox{\tiny 2} & \hbox{\tiny 2} & \hbox{\tiny 1} \vspace{-6pt} \\
    \cdots & \bullet & \bullet & \bullet & \circ & \circ   & \bullet & \times & & D_3 (\xi_t) = D_3 = p_2 (1, 1) - p_1 (2, 1) \vspace{-4pt} \\
           & \hbox{\tiny 1} & \hbox{\tiny 1} & \hbox{\tiny 1} & \hbox{\tiny 2} & \hbox{\tiny 2} & \hbox{\tiny 2} \vspace{-6pt} \\
    \cdots & \bullet & \bullet & \bullet & \circ & \circ   & \circ   & \times & & D_j (\xi_t) = D_4 = p_2 (1, 2) - p_1 (2, 1) \end{array} $$
 for all~$j > 3$ and where~$\times$ means type~1 or type~2.
 In particular, $D_3$ and~$D_4$ are possible drifts of the interface at~$X_t$ depending on its distance to the next type~1 player.
 Before studying the process starting from general initial configurations, we look at the process starting from configurations that have a finite interval of type~1 players, only type~2 players to the left of this
 interval and infinitely many players of each type to the right.
 In picture, this looks like
 $$ \begin{array}{ccccccccccccccc}
           & \hbox{\tiny 2} & \hbox{\tiny 2} & \hbox{\tiny 2} & \hbox{\tiny 2} & \hbox{\tiny 1} & \hbox{\tiny 1} & \hbox{\tiny 1} & \hbox{\tiny 1} & \hbox{\tiny 1} & \hbox{\tiny 2} & \vspace{-6pt} \\
    \cdots & \circ & \circ & \circ & \circ & \bullet & \bullet & \bullet & \bullet & \bullet & \circ & \times  & \times & \times & \cdots \end{array} $$
 For the process starting from this configuration, we let
 $$ Z_t^- := \inf \,\{x \in \Z : \xi_t (x) = 1 \} \quad \hbox{and} \quad M_t := \inf \,\{x > 0  : \xi_t (x + Z_t^-) = 2 \} $$
 be the position of the leftmost type~1 player and the distance between this type~1 player and the closest type~2 player to her right.
 Then, we have the following result.
\begin{lemma} --
\label{lem:expand}
 Assume that~$a_{22} < a_{21}$ and~$D_3 + D_4 > 0$. Then,
 $$ P \,(M_t > 3 \ \hbox{for all} \ t > 0 \ \hbox{and} \ M_t \to \infty \,| \,M_0 > 3) \,\geq \,c \quad \hbox{for some} \quad c > 0. $$
\end{lemma}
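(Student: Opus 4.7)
The plan is to identify $M_t$ as a one-dimensional stochastic process whose drift is bounded below by $\delta := D_3 + D_4 > 0$ whenever $M_t \geq 3$, and then use an exponential supermartingale to extract the joint event in the lemma with positive probability.

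A crucial one-dimensional feature is that a type-1 player whose two neighbors are both type 1 has transition rate $p_{1 \to 2} = 0$, so the interior of the leading 1-block is frozen. Consequently $M_t$ can change only through updates at the four sites $Z_t^- - 1$, $Z_t^-$, $R_t := Z_t^- + M_t - 1$, and $Y_t := R_t + 1$. Under $M_t \geq 3$ the local configuration around $Z_t^-$ reads $\ldots 2\,2 \mid 1\,1\,1 \ldots$, and~\eqref{eq:transition} gives flip rates $p_1(2,1)$ at $Z_t^-$ (moves $Z_t^-$ right, decreases $M_t$ by $1$) and $p_2(1,2)$ at $Z_t^- - 1$ (moves $Z_t^-$ left, increases $M_t$ by $1$), for a left-endpoint contribution $p_2(1,2) - p_1(2,1) = D_4$ to the drift of $M_t$. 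Since $D_4 - D_3 = p_2(1,2) - p_2(1,1)$ is positive exactly when $a_{22} < a_{21}$, the two hypotheses together force $2D_4 > D_3 + D_4 > 0$, so in particular $D_4 > 0$.

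For the right endpoint, let $J_t := \inf\{j \geq 1 : \xi_t(Y_t + j) = 1\}$ denote the length of the 2-block attached to $Y_t$. A case analysis for $J_t \in \{1, 2, \geq 3\}$ shows that the flips of $Y_t$ and $R_t$ contribute, respectively, at least $D_2 > 1$, exactly $D_3$, and exactly $D_4$ to the drift of $M_t$. In the case $J_t = 1$, both neighbors of $Y_t$ are type 1, so $Y_t$ flips at rate $1$ and the merger with the 1-region past $Y_t$ advances $M_t$ by at least $2$, while $R_t$ flips at rate $p_1(2,0)$, yielding at least $2 - p_1(2,0) = D_2$. The other two cases recover $D_3$ and $D_4$ by direct computation. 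Because $D_2 > 1 > D_3$ and $D_4 > D_3$, the right-endpoint contribution is bounded below by $D_3$ regardless of the (evolving, arbitrary) configuration past $Y_t$. Adding both contributions, on $\{M_t \geq 3\}$ the drift of $M_t$ is at least $\delta = D_3 + D_4 > 0$.

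To conclude, the downward jumps of $M_t$ have size exactly $1$ and occur at total rate at most $2$, while upward jumps have size $\geq 1$ and only improve the drift. For $\alpha > 0$ small enough, the positive drift makes $\exp(-\alpha M_t)$ a supermartingale on $\{M_t \geq 4\}$; optional stopping at the hitting time of $3$ yields $P(M_t = 3 \text{ for some } t > 0 \mid M_0 \geq 4) \leq e^{-\alpha} < 1$, so $c := 1 - e^{-\alpha}$ provides the probability bound for $\{M_t > 3 \text{ for all } t\}$, and on that event the uniform positive drift plus bounded downward jumps forces $M_t \to \infty$ almost surely by the strong law applied to the compensated martingale. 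The main obstacle is the third step, in which one needs the uniform lower bound by $D_3$ on the right-endpoint drift across all (a priori arbitrary and evolving) configurations past $Y_t$; the hypothesis $(M,d)=(1,1)$ is what both permits the frozen-interior observation and keeps this case analysis tractable.
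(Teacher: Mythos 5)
Your proposal is correct and follows essentially the same route as the paper: you bound the drift of $M_t$ below by $D_3+D_4$ through the same endpoint decomposition and case analysis on the adjacent 2-block (giving $D_2$, $D_3$, $D_4$, with $D_4>D_3$ forced by $a_{22}<a_{21}$), and then run the exponential supermartingale $e^{-\alpha M_t}$ with optional stopping, which is exactly the paper's $Z_t=e^{-aM_t}$ and $\Phi_j'(0)=-(D_{j\wedge 4}+D_4)\,Z_t$ argument yielding $c=1-e^{-b}$. The only cosmetic difference is the last step: the paper deduces $M_t\to\infty$ via the two-sided stopping times $T_n=\min(\tau_3,\tau_n)$ and a monotone limit, whereas you invoke supermartingale/LLN considerations on the event $\{M_t>3\ \hbox{for all}\ t\}$; both are fine.
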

\begin{proof}
 To begin with, we let~$Z_t := e^{- a M_t}$ where~$a > 0$ and
 $$ \begin{array}{l} \Phi_j (a) := \lim_{h \to \,0} \,h^{-1} \,E \,(Z_{t + h} - Z_t \,| \,\xi_t \ \hbox{and} \ M_t > 3 \ \hbox{and} \ K_t = j) \end{array} $$
 for all~$j > 1$.
 To study these quantities, we let
 $$ Z_t^+ := Z_t^- + M_t - 1 \quad \hbox{and} \quad K_t := \inf \,\{x > 0  : \xi_t (x + X_t) = 1 \} $$
 be the right boundary of the type~1 interval starting at~$Z_t^-$ and the distance between this type~1 player and the closest type~1 player to her right.
 Then, we have
\begin{equation}
\label{eq:expand-1}
  \begin{array}{rcl}
  \lim_{h \to \,0} \,h^{-1} \,P \,(Z_{t + h}^+ = Z_t^+ + 1 \,| \,\xi_t \ \hbox{and} \ M_t > 3 \ \hbox{and} \ K_t = 3) & \n = \n & p_2 (1, 1) \vspace*{4pt} \\
  \lim_{h \to \,0} \,h^{-1} \,P \,(Z_{t + h}^+ = Z_t^+ - 1 \,| \,\xi_t \ \hbox{and} \ M_t > 3 \ \hbox{and} \ K_t = 3) & \n = \n & p_1 (2, 1) \end{array}
\end{equation}
 with probability one, while for the left boundary~$Z_t^-$,
\begin{equation}
\label{eq:expand-2}
  \begin{array}{rcl}
  \lim_{h \to \,0} \,h^{-1} \,P \,(Z_{t + h}^- = Z_t^- + 1 \,| \,\xi_t \ \hbox{and} \ M_t > 3) & \n = \n & p_1 (2, 1)  \vspace*{4pt} \\
  \lim_{h \to \,0} \,h^{-1} \,P \,(Z_{t + h}^- = Z_t^- - 1 \,| \,\xi_t \ \hbox{and} \ M_t > 3) & \n = \n & p_2 (1, 2). \end{array}
\end{equation}
 Since~$M_t = Z_t^+ - Z_t^- + 1$, it follows from~\eqref{eq:expand-1}--\eqref{eq:expand-2} that
 $$ \begin{array}{rrl}
    \Phi_3 (a) & \n := \n & \lim_{h \to \,0} \,h^{-1} \,E \,(Z_{t + h} - Z_t \,| \,\xi_t \ \hbox{and} \ M_t > 3 \ \hbox{and} \ K_t = 3) \vspace*{4pt} \\
               & \n  = \n &  Z_t \,(e^{-a} - 1) \,\lim_{h \to \,0} \,h^{-1} \,P \,(M_{t + h} - M_t = 1   \,| \,M_t > 3 \ \hbox{and} \ K_t = 3) \vspace*{4pt} \\ && \hspace*{10pt} + \
                       Z_t \,(e^a    - 1) \,\lim_{h \to \,0} \,h^{-1} \,P \,(M_{t + h} - M_t = - 1 \,| \,M_t > 3 \ \hbox{and} \ K_t = 3) \vspace*{4pt} \\
               & \n  = \n &  (e^{-a} - 1)(p_2 (1, 1) + p_2 (1, 2)) \,Z_t + 2 \,(e^a - 1) \,p_1 (2, 1) \,Z_t \end{array} $$
 therefore, taking the derivative at~$a = 0$, we get
 $$ \Phi_3' (0) = - (p_2 (1, 1) + p_2 (1, 2)) \,Z_t + 2 \,p_1 (2, 1) \,Z_t = - (D_3 + D_4) \,Z_t. $$
 Using the same approach, we prove in general that
 $$ \Phi_j' (0) = - (D_{j \wedge 4} + D_4) \,Z_t \leq - (D_3 + D_4) \,Z_t < 0 \quad \hbox{for all} \quad j > 1 $$
 since~$D_3 < D_4 < D_2$ when~$a_{22} < a_{21}$.
 In particular,
 $$ \Phi_j (b) \leq \Phi_j (0) = 0 \quad \hbox{for some~$b > 0$ fixed from now on} $$
 showing that, as long as~$M_t > 3$, the process~$Z_t$ is a supermartingale with respect to the natural filtration of the death-birth process.
 To conclude, we now apply the optional stopping theorem to this supermartingale using the stopping times
 $$ \tau_3 := \inf \,\{t : M_t \leq 3 \} \quad \hbox{and} \quad \tau_n := \inf \,\{t : M_t \geq n \} \quad \hbox{for all} \quad n > 2. $$
 Since~$T_n := \min (\tau_3, \tau_n)$ is finite, whenever~$M_0 > 3$,
\begin{equation}
\label{eq:expand-3}
  \begin{array}{rcl}
    e^{- 4b} & \n \geq \n & E \,(Z_0) \geq E \,(Z_{T_n}) \vspace*{4pt} \\
             & \n \geq \n & E \,(Z_{T_n} \,| \,T_n = \tau_3) \,P \,(T_n = \tau_3) + E \,(Z_{T_n} \,| \,T_n = \tau_n) \,P \,(T_n = \tau_n) \vspace*{4pt} \\
             & \n \geq \n & e^{- 3b} \,(1 - P \,(T_n = \tau_n)) + e^{- nb} \,P \,(T_n = \tau_n). \end{array}
\end{equation}
 Since the event~$\{T_n = \tau_n \}$ is nonincreasing with respect to~$n$ for the inclusion, we also deduce from the monotone convergence theorem that
\begin{equation}
\label{eq:expand-4}
  \begin{array}{l}
    P \,(M_t > 3 \ \hbox{for all} \ t > 0 \ \hbox{and} \ M_t \to \infty) \vspace*{4pt} \\ \hspace*{40pt} \geq \
    P \,(T_n = \tau_n \ \hbox{for all} \ n > 3) = \lim_{n \to \infty} \,P \,(T_n = \tau_n). \end{array}
\end{equation}
 Combining~\eqref{eq:expand-3}--\eqref{eq:expand-4}, we deduce that
 $$ \begin{array}{l}
     P \,(M_t > 3 \ \hbox{for all} \ t > 0 \ \hbox{and} \ M_t \to \infty \,| \,M_0 > 3) \vspace*{4pt} \\ \hspace*{40pt} \geq \
    \lim_{n \to \infty} \,P \,(T_n = \tau_n) \geq \lim_{n \to \infty} \,(e^{- 3b} - e^{- 4b})(e^{- 3b} - e^{- nb})^{-1} \vspace*{4pt} \\ \hspace*{40pt} \geq \
    (e^{- 3b} - e^{- 4b}) \,e^{3b} = 1 - e^{-b} \end{array} $$
 therefore the lemma holds for~$c := 1 - e^{-b} > 0$.
\end{proof} \\ \\
 To deal with the process starting from product measures, we note that every realization induces a partition of the space-time universe into type 1 and type 2 connected components.
 More precisely, assuming that there is initially a type~2 at the origin, we define the type~2 connected component starting at the origin as
 $$ C_0 := \{(x, t) \in \Z \times \R_+ : \hbox{there is a path~$(0, 0) \to_2 (x, t)$ going forward} \} $$
 where~$(0, 0) \to_2 (x, t)$ means that there are times and vertices
 $$ 0 = t_1 < t_2 < \cdots < t_{n + 1} = t \quad \hbox{and} \quad 0 = x_1, x_2, \ldots, x_n = x $$
 such that the following condition is satisfied:
 $$ (\xi_s (x_j) = 2 \ \ \hbox{for all} \ \ t_j \leq s \leq t_{j + 1}) \ \ \hbox{holds for} \ \ j = 1, 2, \ldots, n. $$
 Then, we have the following lemma.
\begin{figure}[t]
\centering
\scalebox{0.50}{\input{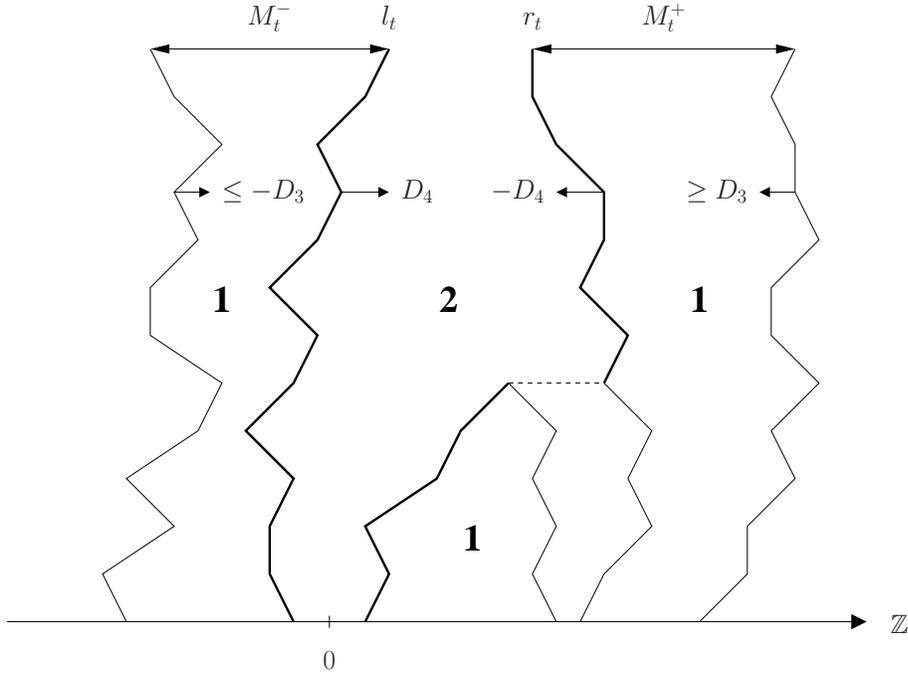}}
\caption{\upshape{Picture related to the proof of Lemma~\ref{lem:bounded-1}.}}
\label{fig:dilemma}
\end{figure}
\begin{lemma} --
\label{lem:bounded-1}
 Assume that~$a_{22} < a_{21}$ and~$D_3 + D_4 > 0$. Then,
 $$ T := \inf \,\{t > 0 : C_0 \cap (\Z \times (t, \infty)) = \varnothing \} < \infty \quad \hbox{with probability one}. $$
\end{lemma}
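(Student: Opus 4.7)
The plan is to show that the type-2 connected component $C_0$ started from the origin almost surely has a finite extinction time $T$, by tracking the four positions drawn in Figure~\ref{fig:dilemma}: the left and right boundaries $l_t, r_t$ of the current type-2 interval at the front of $C_0$, and the positions $M_t^-, M_t^+$ of the nearest exterior type-2 players on each side. The component dies exactly when the width $r_t - l_t$ becomes negative.

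First, I would compute drift estimates for each of the four positions in the same spirit as the $D_j$ calculation preceding Lemma~\ref{lem:expand}. In the ``large gap'' regime where $l_t - M_t^- \geq 4$, $M_t^+ - r_t \geq 4$, and $r_t - l_t \geq 3$, a direct calculation with $p_1(n_1, n_2)$ and $p_2(n_1, n_2)$ yields: $l_t$ drifts right at exactly $D_4$, $r_t$ drifts left at exactly $D_4$, $M_t^-$ drifts left at rate at least $D_3$, and $M_t^+$ drifts right at rate at least $D_3$. Under the hypothesis $a_{22} < a_{21}$, comparing the denominators in the definitions of $D_3$ and $D_4$ gives $D_4 > D_3$, so the assumption $D_3 + D_4 > 0$ forces $D_4 > 0$. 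In this regime the two gaps $l_t - M_t^-$ and $M_t^+ - r_t$ therefore have drift at least $D_3 + D_4 > 0$ while the inner width $r_t - l_t$ has negative drift $-2 D_4$.

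Second, I would apply the exponential supermartingale argument from Lemma~\ref{lem:expand} to each side of $C_0$ separately, setting $Z_t^\pm := e^{-a(G_t^\pm - 4)}$ for the gap $G_t^\pm$ and invoking the optional stopping theorem with the stopping times $\inf\{t : G_t^\pm \leq 4\}$ and $\inf\{t : G_t^\pm \geq n\}$. This yields that, from any configuration with both gaps $\geq 4$, with some fixed positive probability $c > 0$ both gaps grow to infinity without ever returning below $4$. On that event the cluster is effectively isolated, $l_t$ and $r_t$ evolve as (approximately) independent biased random walks with drifts $+D_4$ and $-D_4$, and the width becomes a biased random walk on $\{0, 1, 2, \ldots\}$ with drift $-2 D_4 < 0$; such a walk is absorbed at $0$ in finite time almost surely, after which one additional flip annihilates the last remaining type-2 in $C_0$.

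Finally, I would upgrade this to an almost-sure statement via the Markov property. As long as $C_0$ survives, either the gaps are already $\geq 4$, giving an independent trial of the death scenario with success probability at least $c$, or they are smaller, in which case a short local analysis shows that the configuration returns to a state with both gaps $\geq 4$ in bounded expected time (or else extinction happens directly). A Borel--Cantelli argument on these independent trials then gives $T < \infty$ almost surely. The hard part of the proof will be making the ``isolation then death'' step rigorous when $C_0$ splits into two intervals after an interior type-2 flips to type-1; to handle this I would track each connected piece separately, apply the above argument to each, and use that only finitely many such splits can occur before extinction to close the argument by a union bound.
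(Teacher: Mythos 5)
Your overall strategy (track the extreme points of $C_0$ and the flanking gaps, squeeze the component using the exponential supermartingale of Lemma~\ref{lem:expand}, then conclude by repeated trials) is the same as the paper's, but two steps as you state them would fail. First, the drift claims are wrong in general because $C_0$ is a space-time component, not a solid interval of type-2 players: at a fixed time the sites strictly between $l_t$ and $r_t$ may be of type~1. If, say, $\xi_t(l_t+1)=2$ but $\xi_t(l_t+2)=1$, the drift of $l_t$ is $p_2(1,1)-p_1(2,1)=D_3$, not $D_4$, and the hypotheses only give $D_3+D_4>0$; one can have $D_3<0<D_4$ (e.g.\ $a_{11}=1$, $a_{12}=2$, $a_{21}=3$, $a_{22}$ small), in which case the width $r_t-l_t$ is \emph{not} a walk with uniformly negative drift and your ``drift $-2D_4$'' supermartingale for the width breaks down. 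The point of Lemma~\ref{lem:expand} is precisely that the quantity with a sign-definite drift is the \emph{length of a flanking type-1 interval}, because there the two endpoint contributions combine into $D_{j\wedge 4}+D_4\geq D_3+D_4>0$; the paper therefore squeezes $C_0$ by applying Lemma~\ref{lem:expand} to the two type-1 intervals bordering the component (giving, from any time at which $\min(M_t^-,M_t^+)>3$, probability at least $c^2$ that the width is forced down to $\leq 3$ before the gaps close), rather than by a drift computation for $r_t-l_t$ itself.

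Second, your treatment of splittings is circular: ``only finitely many splits occur before extinction'' presupposes the extinction (or at least spatial boundedness) you are trying to prove, and on the event $\{T=\infty\}$ there is no bound on the number of pieces, so the union bound over pieces is not justified. The paper avoids the issue entirely by defining $l_t$ and $r_t$ as the leftmost and rightmost points of the \emph{whole} component $\{x:(x,t)\in C_0\}$, so that internal splits are invisible, and by arguing by contradiction on $\{T=\infty\}$ with a renewal-type chain of uniformly positive conditional probabilities: within one time unit the gaps exceed $3$ with probability $\geq c_1$; given that, with probability $\geq c^2$ the width reaches $\leq 3$ before the gaps shrink (Lemma~\ref{lem:expand}); given width $\leq 3$, the component dies within one time unit with probability $\geq c_2$. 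Since on $\{T=\infty\}$ these trials occur infinitely often, a conditional Borel--Cantelli argument (your trials are not independent, so plain Borel--Cantelli also needs this adjustment) forces $T<\infty$, a contradiction. Your ``bounded expected return time to gaps $\geq 4$'' assertion is likewise unproven and unnecessary once the argument is organized this way.
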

\begin{proof}
 We proceed by contradiction, showing that when~$A := \{T = \infty \}$ occurs, its complement occurs with probability one.
 To begin with, note that
\begin{itemize}
 \item on the event~$A$, the type~2 connected component~$C_0$ is unbounded and \vspace*{4pt}
 \item since there are infinitely many type~1 players on both sides of the origin at time zero, this property remains true at all times.
\end{itemize}
 From these two observations, we deduce that
 $$ 0 < \card \,\{x \in \Z : (x, t) \in C_0 \} < \infty \quad \hbox{for all} \quad t \in (0, \infty) $$
 which, in turn, implies that the left boundary~$l_t$ and right boundary~$r_t$ of the type~2 connected component satisfy the following properties at all times:
 $$ \begin{array}{l}
      - \infty < l_t := \inf \,\{x \in \Z : (x, t) \in C_0 \} \vspace*{4pt} \\ \hspace*{100pt} \leq \sup \,\{x \in \Z : (x, t) \in C_0 \} =: r_t < \infty. \end{array} $$
 We also observe that
\begin{equation}
\label{eq:bounded-1}
  \begin{array}{rcl}
    M_t^- & \n := \n & \inf \,\{x > 0 : \xi_t (l_t - x) = 2 \} > 1  \vspace*{4pt} \\
    M_t^+ & \n := \n & \inf \,\{x > 0 : \xi_t (r_t + x) = 2 \} > 1. \end{array}
\end{equation}
 Figure~\ref{fig:dilemma} gives an illustration of these processes.
 Now, the evolution rules of the death-birth updating process clearly imply that there exist~$c_1, c_2 > 0$ such that
\begin{equation}
\label{eq:bounded-2}
  \begin{array}{rcl}
    P \,(\min \,(M_{t + 1}^-, M_{t + 1}^+) > 3 \ | \,\min \,(M_t^-, M_t^+) > 1) & \n \geq \n & c_1 \vspace*{4pt} \\
                                         P \,(T < t + 1 \ | \ r_t - l_t \leq 3) & \n \geq \n & c_2 \end{array}
\end{equation}
 while it follows from Lemma~\ref{lem:expand} that
\begin{equation}
\label{eq:bounded-3}
  \begin{array}{l}
    P \,(\inf \,\{t > s : \min \,(M_t^-, M_t^+) \leq 3 \} \vspace*{4pt} \\ \hspace*{60pt}
                          \geq \inf \,\{t > s : r_t - l_t \leq 3 \} \ | \,\min \,(M_s^-, M_s^+) > 3) \geq c^2 > 0. \end{array}
\end{equation}
 Combining~\eqref{eq:bounded-1}--\eqref{eq:bounded-3}, we conclude that
 $$ \begin{array}{rcl}
      A = \{T = \infty \} \ \hbox{occurs} & \hbox{implies that} & \min \,(M_t^-, M_t^+) > 1 \ \hbox{at all times} \vspace*{3pt} \\
                                          & \hbox{implies that} & \min \,(M_t^-, M_t^+) > 3 \ \hbox{infinitely often} \vspace*{3pt} \\
                                          & \hbox{implies that} & r_t - l_t \leq 3 \ \hbox{infinitely often} \vspace*{3pt} \\
                                          & \hbox{implies that} & T < \infty \ \hbox{with probability one}. \end{array} $$
 This completes the proof.
\end{proof}
\begin{lemma} --
\label{lem:bounded-2}
 Assume that~$a_{22} > a_{21}$ and~$D_4 > 0$. Then,
 $$ T := \inf \,\{t > 0 : C_0 \cap (\Z \times (t, \infty)) = \varnothing \} < \infty \quad \hbox{with probability one}. $$
\end{lemma}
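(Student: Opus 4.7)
The plan is to first adapt Lemma~\ref{lem:expand} to the new hypothesis and then copy the argument of Lemma~\ref{lem:bounded-1} verbatim, the only genuinely new content being the drift verification.

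For the adaptation, I would revisit the exponential supermartingale $Z_t = e^{-aM_t}$ used in the proof of Lemma~\ref{lem:expand}. The identity
$$ \Phi_j'(0) = -(D_{j \wedge 4} + D_4)\, Z_t \qquad (j \geq 2) $$
is purely algebraic and makes no reference to the sign of $a_{22} - a_{21}$, so it remains valid. What must be checked under the new hypothesis is that all coefficients $D_{j \wedge 4} + D_4$ are strictly positive, uniformly in $j$. A direct computation from the explicit formulas gives
$$ D_3 - D_4 \;=\; (a_{11} + a_{12})\cdot \frac{a_{22} - a_{21}}{(a_{11} + a_{12} + a_{21} + a_{22})(a_{11} + a_{12} + 2\,a_{22})} \;>\; 0 $$
when $a_{22} > a_{21}$, so $D_3 > D_4 > 0$; combined with the trivial bound $D_2 \geq 2 - p_1(2, 0) \geq 1$, this yields $\min(D_2 + D_4,\ D_3 + D_4,\ 2 D_4) > 0$. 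By continuity there exists $b > 0$ such that $\Phi_j(b) \leq 0$ for every $j \geq 2$, so $Z_t$ with $a = b$ is a supermartingale as long as $M_t > 3$, and the optional stopping argument of Lemma~\ref{lem:expand} delivers
$$ P(M_t > 3 \text{ for all } t > 0 \text{ and } M_t \to \infty \,|\, M_0 > 3) \geq c $$
for some $c > 0$.

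With this variant in hand, the proof of Lemma~\ref{lem:bounded-1} carries over essentially word for word. Assuming by contradiction $A := \{T = \infty\}$, one deduces the boundedness of the horizontal slices of $C_0$ (since infinitely many type-1 players persist on both sides) and the existence of boundaries $l_t, r_t$ with $M_t^- > 1$ and $M_t^+ > 1$ at all times. The local estimates~\eqref{eq:bounded-2} follow from the graphical construction and are insensitive to the relative order of $a_{22}$ and $a_{21}$. Applying the adapted Lemma~\ref{lem:expand} to both the left- and right-facing half-configurations as in~\eqref{eq:bounded-3}, one obtains that $r_t - l_t \leq 3$ infinitely often on $A$, whence $T < \infty$ almost surely, contradicting $A$.

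The only delicate point is the drift verification above: when $a_{22} > a_{21}$ the inequality $D_3 < D_4$ used in the original proof of Lemma~\ref{lem:expand} flips, but this is benign because the worst-case drift for $j \geq 4$ is $-2 D_4$, which remains strictly negative under the sole new assumption $D_4 > 0$. All other quantitative content of Lemma~\ref{lem:bounded-1} survives unchanged.
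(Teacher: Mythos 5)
Your proposal is correct and follows essentially the same route as the paper: the paper's proof likewise observes that under $a_{22} > a_{21}$ one has $D_4 < D_3 < D_2$, so $D_{j \wedge 4} + D_4 \geq 2D_4 > 0$ for all $j > 1$, hence the conclusion of Lemma~\ref{lem:expand} still holds and the argument of Lemma~\ref{lem:bounded-1} is repeated verbatim. Your explicit computation of $D_3 - D_4$ and the remark that the flipped ordering is harmless match the paper's (more terse) justification exactly.
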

\begin{proof}
 First, we note that the conclusion of Lemma~\ref{lem:expand} holds as well under the assumptions of the present lemma, since whenever~$a_{22} > a_{21}$ we have
 $$ D_4 < D_3 < D_2 \quad \hbox{and} \quad D_{j \wedge 4} + D_4 \geq 2 D_4 \ \ \hbox{for all} \ \ j > 1. $$
 In particular, the lemma follows by repeating the proof of Lemma~\ref{lem:bounded-1}.
\end{proof} \\ \\
 Under the assumptions of Theorem~\ref{th:dilemma} and conditional on the origin being initially occupied by a type~2 player, it follows from Lemmas~\ref{lem:bounded-1}--\ref{lem:bounded-2} that
 the type~2 connected starting at the origin is bounded with probability one.
 Since in addition the number of players is countable,
 $$ \begin{array}{rcl}
      P \,(\hbox{strategy~2 wins}) & \n = \n & P \,(\hbox{$\xi_0 (x) = 2$ and~$T_x = \infty$ for some~$x \in \Z$}) \vspace*{4pt} \\
                                   & \n \leq \n & \sum_{x : \xi_0 (x) = 2} \,P \,(T_x = \infty) = 0 \end{array} $$
 where~$T_x$ denotes the time at which the connected component starting at vertex~$x$ dies out, as defined in the statement of Lemmas~\ref{lem:bounded-1}--\ref{lem:bounded-2}.
 This proves Theorem~\ref{th:dilemma}.


\end{document}